\newtheorem{thm}{Theorem}[section]
\newtheorem{prop}[thm]{Proposition}
\newtheorem{lemma}[thm]{Lemma}
\newtheorem{cor}[thm]{Corollary}
\theoremstyle{definition}
\theoremstyle{definition} \newtheorem{rmk}[thm]{Remark}
\newcommand{\cc}{\mathbb{C}}
\newcommand{\qq}{\mathbb{Q}}
\newcommand{\zz}{\mathbb{Z}}
\newcommand{\ff}{\mathbb{F}}
\newcommand{\proj}{\mathbb{P}}
\newcommand{\Gal}{\mathrm{Gal}}
\newcommand{\SL}{\mathrm{SL}}
\newcommand{\Sp}{\mathrm{Sp}}
\newcommand{\GSp}{\mathrm{GSp}}
\newcommand{\Aut}{\mathrm{Aut}}
\newcommand{\Spec}{\mathrm{Spec}}
\newcommand{\ab}{\mathrm{ab}}
\newcommand{\unr}{\mathrm{unr}}
\newcommand{\tp}{\mathrm{top}}
\title{An abelian subextension of the dyadic division field of a hyperelliptic Jacobian}
\author{Jeffrey Yelton}
\newcommand{\acr}{\newline\indent}
\address{\llap{*\,}University of Milan \acr
Department of Mathematics \acr
Via Cesare Saldini, 50 \acr
Milan \acr
ITALY}
\subjclass[2010]{11G5, 11G10}
\keywords{hyperelliptic curve; abelian field extension; Galois representation}
\begin{document}

\maketitle

\begin{abstract}

Given a field $k$ of characteristic different from $2$ and an integer $d \geq 3$, let $J$ be the Jacobian of the ``generic" hyperelliptic curve given by $y^2 = \prod_{i = 1}^d (x - \alpha_i)$, where the $\alpha_i$'s are transcendental and independent over $k$; it is defined over the transcendental extension $K / k$ generated by the symmetric functions of the $\alpha_i$'s.  We investigate certain subfields of the field $K_{\infty}$ obtained by adjoining all points of $2$-power order of $J(\bar{K})$.  In particular, we explicitly describe the maximal abelian subextension of $K_{\infty} / K(J[2])$ and show that it is contained in $K(J[8])$ (resp. $K(J[16])$) if $g \geq 2$ (resp. if $g = 1$).  On the way we obtain an explicit description of the abelian subextension $K(J[4])$, and we describe the action of a particular automorphism in $\Gal(K_{\infty} / K)$ on these subfields.

\end{abstract}

\section{Introduction} \label{sec1}

Let $k$ be any field of characteristic different from $2$; let $\alpha_1, ... , \alpha_d$ be transcendental and independent over $k$ for some integer $d \geq 3$; and let $K$ denote the extension of $k$ obtained by adjoining the symmetric functions of the $\alpha_i$'s with separable closure denoted $\bar{K}$.  The equation given by 
\begin{equation} \label{eq hyperelliptic}
y^2 = \prod_{i = 1}^{d} (x - \alpha_i)
\end{equation}
defines a hyperelliptic curve $C$ of genus $g := \lfloor (d - 1) / 2 \rfloor$ over $K$.  Its Jacobian, denoted by $J$, is a principally polarized abelian variety over $K$ of dimension $g$.  For each integer $n \geq 1$, we write $J[2^n] \subset J(\bar{K})$ for the $2^n$-torsion subgroup of $J$ and $K_n := K(J[2^n])$ for the (finite algebraic) extension of $K$ obtained by adjoining the coordinates of the points in $J[2^n]$ to $K$; we denote the (infinite algebraic) extension $\bigcup_{n = 1}^{\infty} K_n$ by $K_{\infty} = K(J[2^{\infty}])$.  Let $T_2(J)$ denote the $2$-adic Tate module of $J$; it is a free $\zz_2$-module of rank $2g$ given by the inverse limit of rank-$2g$ $\zz / 2^n \zz$-modules $J[2^n]$ with respect to the multiplication-by-$2$ map.  The canonical principal polarization on $J$ defines the Weil pairing $e_2 : T_2(J) \times T_2(J) \to \zz_2$; it is a nondegenerate, skew-symmetric, $\zz_2$-bilinear pairing on $T_2(J)$.

We have the natural action of the absolute Galois group $G_K = \Gal(\bar{K} / K)$ on each $J[2^{n}]$.  It is well known that this action respects the Weil pairing $e_2$ up to multiplication by the cyclotomic character $\chi_2 : G_K \to \zz_2^{\times}$ (in particular, this implies that $K_{\infty}$ contains the multiplicative subgroup $\mu_2$ of $2$-power roots of unity in the separable closure of $k$).  Each element $\sigma \in G_K$ therefore acts as an automorphism in the group 
$$\GSp(T_2(J)) := \{\sigma \in \mathrm{Aut}_{\zz_{2}}(T_{2}(J))\ |\ e_{2}(P^{\sigma}, Q^{\sigma}) = \chi_{2}(\sigma) e_{2}(P, Q) \ \forall P, Q \in T_{2}(J)\}$$
 of symplectic similitudes.  We denote this natural Galois action by $\rho_2 : G_K \to \GSp(T_2(J))$ and each modulo-$2^n$ action by $\bar{\rho}_{2^{n}} : G_{K} \to \GSp(J[2^{n}])$.  For any field $F$, we write $F(\mu_2)$ for the algebraic extension of $F$ obtained by adjoining all $2$-power roots of unity.  Clearly the image of $\rho_2$ is contained in the symplectic group 
$$\Sp(T_2(J)) := \{\sigma \in \mathrm{Aut}_{\zz_{2}}(T_{2}(J))\ |\ e_{2}(P^{\sigma}, Q^{\sigma}) = e_{2}(P, Q) \ \forall P, Q \in T_{2}(J)\}$$
 if and only if $K = K(\mu_2)$.  For each $n \geq 0$, we write $\Gamma(2^n) \lhd \Sp(T_2(J))$ for the level-$2^n$ principal congruence subgroup consisting of automorphisms whose images modulo $2^n$ are trivial.

It is well known that we always have $K_1 \subseteq k(\alpha_1, ... , \alpha_d)$ and that equality holds except when $d = 4$ (see \cite{yelton2017note}).  The main purpose of this paper is to provide an explicit description of the maximal abelian subextension of $K_{\infty} / K_1$, which we denote by $K_{\infty}^{\ab}$.  (Below for any integer $n \geq 1$, we write $\zeta_{2^n} \in \bar{k}$ to denote a $2^n$th root of unity.)

\begin{thm} \label{thm main}

For $1 \leq i, j \leq 2g + 1$, let $\gamma_{i, j} = \alpha_j - \alpha_i$ (resp. $\gamma_{i, j} = (\alpha_j - \alpha_i) \prod_{l \neq i, j} (\alpha_d - \alpha_l)$) if $d = 2g + 1$ (resp. if $d = 2g + 2$).

a) Suppose that $g \geq 2$.  If $\zeta_4 \in k$, we have 
\begin{equation} \label{eq main (a)} K_2(\mu_2) = K_1(\mu_2, \{\sqrt{\gamma_{i, j}}\}_{1 \leq i < j \leq 2g + 1}) \subsetneq K_{\infty}^{\ab} = K_2(\mu_2, \{\sqrt[4]{\scriptstyle\prod_{j \neq i} \gamma_{i, j}}\}_{i = 1}^{2g + 1}) \subsetneq K_3(\mu_2) \end{equation}
 and $\Gal(K_{\infty}^{\ab} / K_1(\mu_2)) \cong (\zz / 2\zz)^{2g^2 - g} \times (\zz / 4\zz)^{2g}$.  If $\zeta_4 \notin k$, we instead have $K_{\infty}^{\ab} = K_2(\mu_2)$.

b) Suppose that $g = 1$.  If $\zeta_8 \in k$, we have 
\begin{equation} \label{eq main (b)} K_2(\mu_2) = K_1(\mu_2, \{\sqrt{\gamma_{i, j}}\}_{1 \leq i < j \leq 3}) \subsetneq K_{\infty}^{\ab} = K_2(\mu_2, \sqrt[8]{\gamma_{1, 2} \gamma_{1, 3} \gamma_{2, 3}^2}, \sqrt[8]{\gamma_{2, 3} \gamma_{2, 1} \gamma_{3, 1}^2}, \sqrt[8]{\gamma_{3, 1} \gamma_{3, 2} \gamma_{1, 2}^2}) \subsetneq K_4(\mu_2) \end{equation}
 and $\Gal(K_{\infty}^{\ab} / K_1(\mu_2)) \cong \zz / 2\zz \times (\zz / 8\zz)^2$.

If $\zeta_8 \notin k$ but $\zeta_4 \in k$, we instead have 
\begin{equation} K_2(\mu_2) = K_1(\mu_2, \{\sqrt{\gamma_{i, j}}\}_{1 \leq i < j \leq 3}) \subsetneq K_{\infty}^{\ab} = K_2(\mu_2, \sqrt[4]{\gamma_{1, 2} \gamma_{1, 3}}, \sqrt[4]{\gamma_{2, 3}\gamma_{2, 1}}, \sqrt[4]{\gamma_{3, 1}\gamma_{3, 2}}) \subsetneq K_3(\mu_2) \end{equation}
 and $\Gal(K_{\infty}^{\ab} / K_1(\mu_2)) \cong \zz / 2\zz \times (\zz / 4\zz)^2$.

Finally, if $\zeta_4 \notin k$, we instead have $K_{\infty}^{\ab} = K_2(\mu_2) = K_1(\mu_2, \{\sqrt{\gamma_{i, j}}\}_{1 \leq i < j \leq 3})$ and $\Gal(K_{\infty}^{\ab} / K_1(\mu_2)) \cong (\zz / 2\zz)^3$.

c) Let $\sigma \in G_K$ be any Galois automorphism such that $\rho_2(\sigma) = -1 \in \GSp(T_{2}(J))$.  Then $\sigma$ acts on $K_{\infty}^{\ab}$ by fixing $K_1(\mu_2)$, changing the signs of all generators of the form $\sqrt{\gamma_{i, j}}$, and fixing (resp. changing the signs of) the remaining generators given in (\ref{eq main (a)}) and (\ref{eq main (b)}) if $g$ is even (resp. if $g$ is odd).

\end{thm}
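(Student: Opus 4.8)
The plan is to exploit a single uniform mechanism. Since $\sigma$ fixes the coefficients of any rational function defined over $K_1(\mu_2)$ and sends every torsion point $P$ to $P^\sigma = \rho_2(\sigma)(P) = -P$, for any such function $\phi$ and any $2$-power torsion point $P$ we have $\sigma(\phi(P)) = \phi(P^\sigma) = \phi(-P) = ([-1]^*\phi)(P)$, where $[-1]$ is the hyperelliptic negation on $J$. Thus the action of $\sigma$ on any radical realized as a value $\phi(P)$ is governed entirely by the parity of $\phi$ under $[-1]$. I would first record the easy consequences of $\rho_2(\sigma)=-1$: it acts trivially modulo $2$, so fixes $J[2]$ and hence $K_1$, and its similitude factor is $\chi_2(\sigma)=1$, so it fixes $\mu_2$; therefore $\sigma$ fixes $K_1(\mu_2)$. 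Moreover $\sigma^2$ acts as $(-1)^2=1$, so the image of $\sigma$ in the abelian group $\Gal(K_\infty^{\ab}/K_1(\mu_2))$ has order dividing $2$; consequently $\sigma$ sends every Kummer generator $\theta$ (including the quartic and octic ones) to $\pm\theta$, and the whole problem reduces to pinning down these signs.

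For the level-$2$ generators $\sqrt{\gamma_{i,j}}$ I would invoke the description from the proof of parts (a)--(b), in which each $\sqrt{\gamma_{i,j}}$ is, up to a factor in $K_1(\mu_2)^\times$, an ``odd'' coordinate of a $4$-torsion point $Q_{i,j}$ lying over the $2$-torsion class $e_{i,j}$ -- concretely a coordinate of the type that changes sign under $(u,v)\mapsto(u,-v)$, i.e.\ a function $\phi$ with $[-1]^*\phi=-\phi$. The mechanism above then gives $\sigma(\sqrt{\gamma_{i,j}}) = -\sqrt{\gamma_{i,j}}$ for all $i<j$, which is the second assertion. (Equivalently, since $2Q_{i,j}=e_{i,j}$, the point $-Q_{i,j}=Q_{i,j}-e_{i,j}$ differs from $Q_{i,j}$ by translation by the $2$-torsion point $e_{i,j}$, and this translation flips the relevant coordinate.)

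For the remaining generators I would first note that the sign is invisible at level $2$: writing $\delta_i=\prod_{j\neq i}\gamma_{i,j}$ (a product of $2g$ factors), one has $\sigma(\sqrt{\delta_i}) = \prod_{j\neq i}\sigma(\sqrt{\gamma_{i,j}}) = (-1)^{2g}\sqrt{\delta_i} = \sqrt{\delta_i}$, so $\sigma$ fixes the square $\sqrt{\delta_i}$ of the quartic generator $\sqrt[4]{\delta_i}$ and the desired sign must be extracted one level higher. I would therefore realize $\sqrt[4]{\delta_i}$ as a value $\phi_i(R_i)$ at an explicit $8$-torsion point $R_i$ produced in the proof of (a) (with $4R_i=e_i$), and compute the parity of $\phi_i$ under $[-1]$; the claim is that $[-1]^*\phi_i=(-1)^g\phi_i$, whence $\sigma(\sqrt[4]{\delta_i})=(-1)^g\sqrt[4]{\delta_i}$, fixed for $g$ even and negated for $g$ odd. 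The genus-$1$ case (b) is handled identically, with the octic generators realized via the corresponding $16$-torsion points; there each such generator has exactly two honestly square-rooted factors (those of odd exponent), so the level-$2$ contribution is $(-1)^2=1$ and the genuine sign $(-1)^1=-1$ is forced at the top level, matching the stated rule of changing signs when $g$ is odd.

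The main obstacle is this last parity computation, i.e.\ establishing $[-1]^*\phi_i=(-1)^g\phi_i$ for the top-level defining functions. It is precisely the place where lower levels give no information -- the even count $2g$ of square-root factors cancels the sign at level $2$ -- so one is forced into a genuine level-$3$ (resp.\ level-$4$) computation with the explicit $8$- (resp.\ $16$-) torsion points. In group-theoretic terms this reflects the $(\zz / 4\zz)^{2g}$ part of $\Gal(K_\infty^{\ab}/K_1(\mu_2))$: the element $\rho_2(\sigma)=-1\in\Gamma(2)$ has order $2$, so its image in each $\zz / 4\zz$ factor lies in $\{0,2\}$, and deciding between $0$ and $2$ requires the quadratic refinement encoded by these factors. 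I expect the parity $(-1)^g$ to emerge from a Weil-pairing computation -- equivalently from the dependence of the relevant theta characteristic on the dimension $g$ (or on the count $2g+1$ of finite Weierstrass points) -- and this bookkeeping, rather than any conceptual difficulty, is the crux of part (c).
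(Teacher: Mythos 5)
Your proposal leaves most of the theorem unproved and, even within part (c), defers the decisive step. Parts (a) and (b) are the bulk of the statement --- the equality $K_2(\mu_2) = K_1(\mu_2, \{\sqrt{\gamma_{i,j}}\})$, the explicit quartic (resp.\ octic) generators of $K_{\infty}^{\ab}$, the strict inclusions into $K_3(\mu_2)$ (resp.\ $K_4(\mu_2)$), the computation of $\Gal(K_{\infty}^{\ab}/K_1(\mu_2))$, the degenerate cases $\zeta_4 \notin k$ and $\zeta_8 \notin K$, and the validity over arbitrary $k$ of characteristic $\neq 2$ --- and you assume all of it. Worse, the input you ``invoke from the proof of parts (a)--(b)'' (that each $\sqrt{\gamma_{i,j}}$ is, up to $K_1(\mu_2)^{\times}$, an odd coordinate of an explicit $4$-torsion point $Q_{i,j}$, and that each $\sqrt[4]{\prod_{j\neq i}\gamma_{i,j}}$ is a value $\phi_i(R_i)$ at an $8$-torsion point) is not something a proof of (a)--(b) automatically supplies: the paper's proof of those parts runs through braid monodromy (Proposition \ref{prop rho top}, Lemma \ref{lemma pure braid action abelian}), Kummer theory, Sato's computation of $\Gamma(2)^{\ab}$ (Lemma \ref{lemma abelianization}), the standard representation of $S_d$ (Lemma \ref{lemma standard rep}, Propositions \ref{prop main g>1} and \ref{prop main g=1}), and a descent from $\cc$ (Proposition \ref{prop descent}), and at no point produces torsion-point coordinate formulas for these radicals. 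Constructing such $\phi_i$ and $R_i$ is itself a serious undertaking --- Remark \ref{rmk dyadic elliptic} carries out essentially this for a single octic generator when $d = 3$, using the machinery of the author's earlier papers, and even that one instance is described as tedious --- so your ``uniform mechanism'' presupposes exactly the data that is missing.

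Within part (c), your preliminary reductions are sound: $\chi_2(\sigma) = 1$ and $\rho_2(\sigma) \equiv 1 \pmod 2$ give that $\sigma$ fixes $K_1(\mu_2)$; $\sigma^2 = 1$ on $K_{\infty}$ forces $\theta \mapsto \pm\theta$ on each Kummer generator; and you correctly observe that the level-$2$ cancellation $(-1)^{2g} = 1$ makes the quartic sign invisible below the top level. But the crux --- whether the image of $-1 \in \Gamma(2)$ hits $0$ or $2$ in each $\zz/4\zz$ factor of $\Gamma(2)^{\ab}$ (resp.\ $0$ or $4$ in each $\zz/8\zz$ factor when $g = 1$) --- is precisely what you declare you ``expect'' to emerge from an unspecified Weil-pairing or theta-characteristic computation; nothing in your writeup decides between $+1$ and $-1$, and even the sign change on the $\sqrt{\gamma_{i,j}}$'s rests on the unestablished odd-coordinate description. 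The paper settles all of these signs by a short and entirely different mechanism: $-1$ is the image under $\rho^{\tp}$ of the central braid $\Sigma$ if $d$ is odd, and of $\Sigma'$ if $d$ is even (Proposition \ref{prop rho top}(b),(c)); since $\Sigma$ is the product of all the generators $A_{i,j}$, each of which multiplies $\sqrt[N]{\alpha_j - \alpha_i}$ by $\zeta_N$ and fixes the other radicals (Lemma \ref{lemma pure braid action abelian}), it negates each $\sqrt{\gamma_{i,j}}$, scales $\sqrt[4]{\prod_{j\neq i}\gamma_{i,j}}$ by $\zeta_4^{2g} = (-1)^g$, and scales the octic generators by $\zeta_8^4 = -1$ when $g = 1$. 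The sign is thus a count, with multiplicity, of the linear factors $(\alpha_j - \alpha_i)$ appearing in the radicand, read off from the central element of the pure braid group; to rescue your route you would have to actually construct the torsion points and functions and perform the parity computation you postpone, at which point the braid-theoretic argument is both shorter and already available.
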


The following corollary is proven via the argument in Step 4 of the proof of \cite[Lemma 3]{yelton2017note}.

\begin{cor} \label{cor specialization}

Let $a_0, ... , a_{d - 1} \in k$ be elements such that $f(x) := x^d + \sum_{i = 0}^{d - 1} a_i x^i \in k[x]$ is separable, and let $\bar{J}$ be the Jacobian of the hyperelliptic curve defined over $k$ by the equation $y^2 = f(x)$.  Let $\bar{\alpha}_1, ... , \bar{\alpha}_d \in \bar{k}$ denote the roots of the polynomial $f(x) \in k[x]$ and let $\bar{\gamma}_{i, j} \in \bar{k}$ be given by formulas in terms of the $\bar{\alpha}_i$'s analogous to those used to define the $\gamma_{i, j}$'s in the statement of Theorem \ref{thm main}.

a) If $g \geq 2$, the extension $k(\bar{J}[8]) / k(\bar{J}[2])$ contains the subextension 
\begin{equation} \label{eq specialization (a)} k(\bar{J}[2])(\{\sqrt{\bar{\gamma}_{i, j}}\}_{1 \leq i < j \leq 2g + 1}, \{\sqrt[4]{\scriptstyle\prod_{j \neq i} \bar{\gamma}_{i, j}}\}_{i = 1}^{2g + 1}, \zeta_8). \end{equation}

b) If $g = 1$, the extension $k(\bar{J}[16]) / k(\bar{J}[2])$ contains the subextension 
\begin{equation} \label{eq specialization (b)} k(\bar{J}[2])(\{\sqrt{\bar{\gamma}_{i, j}}\}_{1 \leq i < j \leq 2g + 1}, \sqrt[8]{\bar{\gamma}_{1, 2} \bar{\gamma}_{1, 3} \bar{\gamma}_{2, 3}^2}, \sqrt[8]{\bar{\gamma}_{2, 3} \bar{\gamma}_{2, 1} \bar{\gamma}_{3, 1}^2}, \sqrt[8]{\bar{\gamma}_{3, 1} \bar{\gamma}_{3, 2} \bar{\gamma}_{1, 2}^2}, \zeta_{16}). \end{equation}

c) Let $\sigma$ be any automorphism in the absolute Galois group of $k$ which acts on $k(\bar{J}[16])$ as multiplication by $-1$.  Then $\sigma$ acts on the subfields described above by changing the signs of all generators of the form $\sqrt{\gamma_{i, j}}$ and by fixing (resp. changing the signs of) all remaining generators given in (\ref{eq specialization (a)}) and (\ref{eq specialization (b)}) if $g$ is even (resp. if $g$ is odd).

\end{cor}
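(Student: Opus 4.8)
The plan is to deduce the corollary from Theorem \ref{thm main} by a specialization argument, following Step 4 of the proof of \cite[Lemma 3]{yelton2017note}. Let $U \subset \aff^d_k$ denote the open subscheme parametrizing monic separable degree-$d$ polynomials (the complement of the discriminant locus), and let $\mathcal{C} \to U$ be the tautological family of hyperelliptic curves with relative Jacobian $\mathcal{J} \to U$. The generic fiber of this family is exactly the curve $C/K$ of the introduction, so that $K = k(U)$ and each $\bar{\rho}_{2^n}$ factors through the arithmetic fundamental group as $\pi_1(U, u) \to \GSp(\mathcal{J}[2^n])$; here I take the base point $u \in U(k)$ corresponding to the polynomial $f$, which lies in $U$ precisely because $\bar{\alpha}_1, \dots, \bar{\alpha}_d$ are distinct. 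The specialized Jacobian is $\bar{J} = \mathcal{J}_u$, and the specialized representation $\bar{\rho}_{2^n, u} \colon G_k \to \GSp(\bar{J}[2^n])$ is obtained by composing $\bar{\rho}_{2^n}$ with the section $s_u \colon G_k \to \pi_1(U, u)$ attached to the $k$-point $u$. The whole point of working with the generic curve is that every explicit radical appearing in Theorem \ref{thm main} is a regular function on an intermediate cover of $U$, and I will simply read off what happens to these functions when the family is restricted to $u$.

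First I would record the containment of Galois images. Since $\bar{\rho}_{2^n, u} = \bar{\rho}_{2^n} \circ s_u$, its image $\bar{\rho}_{2^n, u}(G_k) = \bar{\rho}_{2^n}(s_u(G_k))$ is a subgroup of the full arithmetic image $\bar{\rho}_{2^n}(\pi_1(U, u))$, which is the image governing the generic torsion field $K_n$. Now let $\theta$ be any of the radicals generating $K_{\infty}^{\ab}$ over $K_1$ --- an element of $\{\sqrt{\gamma_{i, j}}\}$ or of $\{\sqrt[4]{\prod_{j \neq i} \gamma_{i, j}}\}$ when $g \geq 2$, or one of the three octic radicals when $g = 1$. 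By Theorem \ref{thm main} we have $\theta \in K_3$ (resp.\ $\theta \in K_4$) up to roots of unity, so $\theta$ generates an intermediate cover $V_{\theta} \to U$ that is \'etale (here one uses that each $\gamma_{i, j}$ is a unit on $U$) and is dominated by the level-$8$ cover of $U$ with function field $K_3$ (resp.\ the level-$16$ cover with function field $K_4$). Restricting the tower $\mathcal{J}[8]\text{-cover} \to V_{\theta} \to U$ to $u$ turns domination of covers into an inclusion of residue fields at compatible points, giving $k(\bar{\theta}) \subseteq k(\bar{J}[8])$ (resp.\ $k(\bar{J}[16])$), where $\bar{\theta}$ is the specialization of $\theta$, namely the corresponding radical in the $\bar{\gamma}_{i, j}$. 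Running this over all the generators produces the inclusions (\ref{eq specialization (a)}) and (\ref{eq specialization (b)}).

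The one genuine point to check --- and the place where the jump from $K_2$ to $k(\bar{J}[8])$ in part (a) is explained --- is the bookkeeping of roots of unity, since Theorem \ref{thm main} phrases everything after adjoining all of $\mu_2$ whereas the corollary adjoins none. This is harmless because each radical $\theta$ is acted on by $G_K$ through a character valued in $\mu_2$, $\mu_4$, or (in genus $1$) $\mu_8$, and such a character only detects the cyclotomic character modulo $4$ or $8$; the relevant roots of unity $\mu_8$ (resp.\ $\mu_{16}$) already lie in $k(\bar{J}[8])$ (resp.\ $k(\bar{J}[16])$) by Galois equivariance of the Weil pairing $e_2$. Consequently the adjunction of $\mu_2$ in Theorem \ref{thm main} contributes nothing beyond what the torsion field $k(\bar{J}[8])$ (resp.\ $k(\bar{J}[16])$) already contains, and the covers $V_{\theta}$ are honestly dominated by the torsion cover after specialization without any auxiliary cyclotomic extension.

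Finally, part (c) follows by transporting the group-theoretic computation of Theorem \ref{thm main}(c). An automorphism $\sigma$ in the absolute Galois group of $k$ acting on $k(\bar{J}[16])$ as $-1$ satisfies $\bar{\rho}_{16, u}(\sigma) = -1 \in \GSp(\bar{J}[16])$, so under the section $s_u$ it acts on each specialized radical $\bar{\theta}$ exactly as the element $-1 \in \GSp(T_2(J))$ acts on the corresponding generic radical $\theta$; this action is intrinsic to the symplectic module and therefore insensitive to the specialization. Theorem \ref{thm main}(c) then gives the asserted signs: $\sigma$ negates every $\sqrt{\bar{\gamma}_{i, j}}$ and fixes (resp.\ negates) the remaining generators according to the parity of $g$. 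I expect the main obstacle to be exactly the roots-of-unity bookkeeping of the preceding paragraph, together with the verification that the action of $\GSp$ on each radical is genuinely intrinsic --- i.e.\ depends only on the symplectic $\zz_2$-module $T_2(J)$ and not on the particular field of definition --- since this is precisely what licenses transporting both the containments and the sign computations from the generic curve to its specialization.
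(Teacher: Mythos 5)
Your route is the paper's route: the paper proves this corollary by precisely the specialization argument you outline, citing Step 4 of the proof of \cite[Lemma 3]{yelton2017note}, and whether one phrases it via a section $s_u \colon G_k \to \pi_1(U, u)$ at the $k$-point $u$ or via places and integral models as in \S\ref{sec4} is immaterial. The mechanism is sound: over the discriminant complement with $2$ inverted the torsion covers are finite \'etale, the residue field at a point compatible with $u$ is exactly $k(\bar{J}[2^n])$ (torsion specializes bijectively), each radical is a unit-valued regular function on an intermediate cover, and $\zeta_{2^n} \in k(\bar{J}[2^n])$ by Weil-pairing equivariance. Your transport of part (c) is also fine, granted the containments.

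The gap is exactly where you suspected it, and your proposed fix does not work as stated: the assertion that a $\mu_4$- or $\mu_8$-valued Kummer character ``only detects the cyclotomic character modulo $4$ or $8$'' is false as a general principle. Theorem \ref{thm main} only places $\theta = \sqrt[4]{\prod_{j \neq i} \gamma_{i, j}}$ in $K_3(\mu_2)$, not in $K_3$, and the character $\sigma \mapsto \sigma(\theta)/\theta$ of $\Gal(K_3(\mu_2)/K_3) \cong \Gal(k(\mu_2)/k(\zeta_8))$ need not be trivial: for $k = \qq$ this group is $1 + 8\zz_2 \cong \zz_2$, which admits a nontrivial $\mu_4$-valued character whose fixed field sits inside $\qq(\zeta_{32})$. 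So a priori $\theta \in K_3(\zeta_{32})$ and your specialization only yields $\sqrt[4]{\prod_{j \neq i} \bar{\gamma}_{i, j}} \in k(\bar{J}[8])(\zeta_{32})$, which is \emph{not} contained in $k(\bar{J}[8])$; the same problem recurs, worse, for the $8$th roots in genus $1$. (The square roots are safe even by your naive argument, since the unique quadratic subextension of $k(\mu_2)/k(\zeta_4)$ is $k(\zeta_8) \subseteq k(\bar{J}[8])$.) What closes the gap is finer, integral information of the kind the paper develops: in \S\ref{sec4} the radical generators of $\mathcal{O}_{S, \infty}^{\ab}$ are shown to differ from the clean radicals by constants $\sqrt{a}$ with $a \in \zz[\mu_2]$, and unramifiedness away from $2$ constrains such constants (as with the $a_{i, j} \in \{\pm 1, \pm 2\}$, whose square roots lie in $\zz[\zeta_8]$) so that the ambiguity specializes into $k(\bar{J}[8])$ rather than merely into $k(\bar{J}[8], \mu_2)$; in genus $1$, Remark \ref{rmk dyadic elliptic} exhibits honest $8$th roots of $\gamma_{1, 2}\gamma_{1, 3}\gamma_{2, 3}^2$, etc., inside $K_4$ itself, after which $\mu_8 \subset k(\zeta_{16}) \subseteq k(\bar{J}[16])$ absorbs the choice of root. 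Your proof needs this ramification/integrality input (or the explicit formulas) in place of the character heuristic.
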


\begin{rmk} \label{rmk dyadic elliptic}

We can also verify part (b) of Theorem \ref{thm main} for the $d = 3$ case by combined use of the formulas given in \cite{yelton2015dyadic} and \cite{yelton2017note}.  We illustrate how to see that $K_4(\mu_2)$ contains an element whose $8$th power is $\gamma_{1, 2}\gamma_{1, 3}\gamma_{2, 3}^2$ as follows (one may use a similar argument for the other generators).  For $n = 1, 2, 3$ and $1 \leq i < j \leq 3$, we fix elements $\sqrt[2^n]{\gamma_{i, j}} \in \bar{K}$ whose $2^n$th powers are $\gamma_{i, j} \in \bar{K}$ and which are compatible in the obvious way, and we fix a square root of $\sqrt{\gamma_{1, 2}} + \sqrt{\gamma_{1, 3}}$.  (Note that due to the equivariance of the Weil pairing, $-1$ has a $2^n$th root in $K(J[2^{n + 1}])$ for each $n \geq 1$.)  Let $\mathcal{L}$ be the $3$-regular tree defined in \cite{yelton2015dyadic}, and assume the notation used throughout that paper.  Let $\{\Lambda_0, \Lambda_1, \Lambda_2, \Lambda_3, \Lambda_4\}$ be a non-backtracking path in $\mathcal{L}$, where $\Lambda_0$ is the root and $\Lambda_1 = \Lambda(\alpha_1)$.  Then it is tedious but straightforward to verify that there exists a decoration $\Psi : \mathcal{L} \setminus \{\Lambda_0\} \to \bar{K}$ (see \cite[Definition 1.2]{yelton2015dyadic}) such that $\Psi(\Lambda_2), \Psi(\Lambda_2') = \gamma_{1, 2} + \gamma_{1, 3} \pm 2\sqrt{\gamma_{1, 2}}\sqrt{\gamma_{1, 3}}$; 
$$\Psi(\Lambda_3), \Psi(\Lambda_3') = -\Psi(\Lambda_2) - (\Psi(\Lambda_2) - \Psi(\Lambda_2')) \pm 4(\sqrt{\gamma_{1, 2}} + \sqrt{\gamma_{1, 3}})\sqrt[4]{\gamma_{1, 2}}\sqrt[4]{\gamma_{1, 3}};$$
 and 
$$\Psi(\Lambda_4) = -\Psi(\Lambda_3) - (\Psi(\Lambda_3) - \Psi(\Lambda_3')) + 4\sqrt{2}(\sqrt{\gamma_{1, 2}} + \sqrt{\gamma_{1, 3}} + 2\sqrt[4]{\gamma_{1, 2}}\sqrt[4]{\gamma_{1, 3}})\sqrt{\sqrt{\gamma_{1, 2}} + \sqrt{\gamma_{1, 3}}}\sqrt[8]{\gamma_{1, 2}}\sqrt[8]{\gamma_{1, 3}}.$$
By \cite[Proposition 2.5(b)]{yelton2015dyadic}, we have $\Psi(\Lambda_2), \Psi(\Lambda_2'), \Psi(\Lambda_3), \Psi(\Lambda_3'), \Psi(\Lambda_4) \in K_4$.  Moreover, from \cite[Theorem 1, Remark 11(b)]{yelton2017note}, we see that $\sqrt{2}(\sqrt{\gamma_{1, 2}} + \sqrt{\gamma_{1, 3}} + 2\sqrt[4]{\gamma_{1, 2}}\sqrt[4]{\gamma_{1, 3}}) \in K_3$.  It follows that 
\begin{equation} \sqrt{\sqrt{\gamma_{1, 2}} + \sqrt{\gamma_{1, 3}}}\sqrt[8]{\gamma_{1, 2}}\sqrt[8]{\gamma_{1, 3}} \in K_4. \end{equation}
  By \cite[Theorem 1]{yelton2017note}, we have $\sqrt{\pm\gamma_{i, j}}, B_1 := \sqrt[4]{-\gamma_{2, 3}} \sqrt{\sqrt{\gamma_{1, 2}} + \sqrt{\gamma_{1, 3}}} \in K_3$.  Therefore, we have 
\begin{equation} \sqrt[4]{-\gamma_{2, 3}}\sqrt[8]{\gamma_{1, 2}}\sqrt[8]{\gamma_{1, 3}} = \sqrt{-\gamma_{2, 3}}\sqrt{\sqrt{\gamma_{1, 2}} + \sqrt{\gamma_{1, 3}}}\sqrt[8]{\gamma_{1, 2}}\sqrt[8]{\gamma_{1, 3}} / B_1 \in K_4. \end{equation}

\end{rmk}

The rest of this paper is dedicated to a proof of Theorem \ref{thm main}; our plan is as follows.  We will first assume that $k = \cc$ and prove Theorem \ref{thm main} in that case by viewing the situation in a topological setting similar to the author's strategy in \cite{yelton2015images}; we will retain this assumption throughout \S\ref{sec2} and \S\ref{sec3}.  In \S\ref{sec2}, we determine generators for the $4$-torsion field $K_2$, which is contained in $K_{\infty}^{\ab}$.  Then in \S\ref{sec3}, we determine generators for $K_{\infty}^{\ab}$ over $K_2$, treating the $g \geq 2$ case and the $g = 1$ case separately.  Finally, in \S\ref{sec4} we generalize these results to the situation where $k$ is any field of characteristic different from $2$.

The author would like to thank the referee for a number of corrections and suggestions which have improved this text.

\section{The $4$-division field over $\cc$} \label{sec2}

We assume for this section as well as in \S\ref{sec3} that $k = \cc$, so that $K$ is generated over $\cc$ by the symmetric functions of the transcendental elements $\alpha_i$.  We will consider $K$ as a subfield of the function field of the ordered configuration space $Y_d$ of $d$-element ordered subsets of $\cc$; we view $Y_d(\cc)$ as a topological space.  The fundamental group of $Y_d$ is well known to be the pure braid group on $d$ strands, which we denote by $P_d$.  This has a well-known presentation (see \cite[Lemma 1.8.2]{birman1974braids}) with generators $A_{i, j}$ for $1 \leq i < j \leq d$.  It is known (see \cite[Corollary 1.8.4]{birman1974braids} and its proof) that the center of $P_d$ is cyclically generated by the element $\Sigma := A_{1, 2} (A_{1, 3} A_{2, 3}) ... (A_{1, d} A_{2, d} ... A_{d - 1, d}) \in P_d$.  The profinite completion $\widehat{P}_d$ of $P_d$ is the \'{e}tale fundamental group of $Y_d$ and may be identified with the Galois group of $K^{\unr} / K(\{\alpha_i\}_{1 \leq i \leq d})$, where $K^{\unr}$ is the maximal extension of $K$ unramified away from the primes $(\alpha_j - \alpha_i)$ for $1 \leq i < j \leq d$.  The criterion of N\'{e}ron-Ogg-Shafarevich (\cite[Theorem 1]{serre1968good}) implies that the natural $\ell$-adic representation $\rho_{\ell} : G_K \to \GSp(T_{\ell}(J))$, restricted to the subgroup fixing the Galois extension $K(\{\alpha_i\}_{1 \leq i \leq d})$, factors through the restriction map $\Gal(\bar{K} / K(\{\alpha_i\}_{1 \leq i \leq d})) \twoheadrightarrow \widehat{P}_d$; we denote the induced representation of $\widehat{P}_d$ also by $\rho_{\ell}$.

There is a ``universal" family of hyperelliptic curves $\mathcal{C} \to Y_d$ whose fiber $\mathcal{C}_{\underline{z}}$ over each point $\underline{z} = (z_1, ... , z_d) \in Y_d(\cc)$ is the hyperelliptic curve given by the monic polynomial in $\cc[x]$ whose roots are the elements of the $d$-element ordered set $\underline{z}$; this family has $C$ as its generic fiber.  We write $\rho^{\tp} : P_d \to \Aut(H_1(C_{\underline{z}_0}, \zz))$ for the representation induced by the monodromy representation $P_d \cong \pi_1(Y_d, \underline{z}_0) \to \Aut(\pi_1(\mathcal{C}_{\underline{z}_0}, P_0))$ associated to the family $\mathcal{C} \to Y_d$, where $\underline{z}_0 := (1, ... , d) \in Y_d(\cc)$ and $P_0 \in \mathcal{C}_{\underline{z}}$ are basepoints).  The monodromy action respects the intersection pairing on $\mathcal{C}_{\underline{z}_0}$, and therefore, the image of $\rho^{\tp}$ is contained in the group of symplectic automorphisms $\Sp(H_1(C_{\underline{z}_0}, \zz))$.  (See \cite[\S2]{yelton2015images} for more details of this construction.)

As both $P_d$ and $\Sp(H_1(C_{\underline{z}_0}, \zz))$ are residually finite, the representation $\rho^{\tp}$ induces a representation of the profinite completion $\widehat{P}_d$ on each pro-$\ell$ completion $H_1(\mathcal{C}_{\underline{z}_0}, \zz) \otimes \zz_{\ell}$ of $H_1(\mathcal{C}_{\underline{z}_0}, \zz)$.  For each prime $\ell$, we denote this representation by $\rho_{\ell}^{\tp} : \widehat{P}_d \to \Sp(H_1(\mathcal{C}_{\underline{z}_0}, \zz) \otimes \zz_{\ell})$.  Our technique is to study $\rho_2$ by relating it to the topologically-defined representation $\rho_2^{\tp}$ using a key comparison result proved by the author as \cite[Proposition 2.2]{yelton2015images}.

\begin{lemma} \label{lemma key}

For any prime $\ell$, there is an isomorphism of $\zz_{\ell}$-modules $H_1(\mathcal{C}_{\underline{z}_0}, \zz) \otimes \zz_{\ell} \stackrel{\sim}{\to} T_{\ell}(J)$ making the representations $\rho_{\ell}^{\tp}$ and $\rho_{\ell}$ isomorphic.

\end{lemma}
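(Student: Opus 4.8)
The plan is to establish Lemma \ref{lemma key} by invoking a comparison isomorphism between the singular homology of the complex fiber and the $\ell$-adic Tate module of the Jacobian, and then to promote this isomorphism from one respecting the (geometric) monodromy action to one respecting the full Galois action of $\widehat{P}_d$. Since the lemma is attributed to the author's earlier work \cite[Proposition 2.2]{yelton2015images}, I expect the cleanest route to be an appeal to that result, but I will sketch how one proves it from scratch.

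First I would recall the classical comparison theorem: for the single abelian variety $J_{\underline{z}_0}$ over $\cc$ arising as the fiber over the basepoint, there is a canonical isomorphism $H_1(\mathcal{C}_{\underline{z}_0}, \zz) \stackrel{\sim}{\to} H_1(J_{\underline{z}_0}, \zz)$ (the Jacobian and its underlying curve share first homology), and completing at $\ell$ gives $H_1(\mathcal{C}_{\underline{z}_0}, \zz) \otimes \zz_{\ell} \stackrel{\sim}{\to} T_{\ell}(J_{\underline{z}_0})$. This is the standard identification of the Tate module of a complex abelian variety with the $\ell$-adic completion of the homology of the associated complex torus; under it, the intersection pairing on the curve matches (up to sign) the Weil pairing $e_{\ell}$, so the isomorphism is symplectic. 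The next step is to check that this isomorphism is compatible with the two monodromy representations: the topological monodromy representation $\rho^{\tp}$ is defined precisely as the action of $\pi_1(Y_d, \underline{z}_0)$ on $H_1(\mathcal{C}_{\underline{z}_0}, \zz)$ induced by transporting the fibers of $\mathcal{C} \to Y_d$, whereas $\rho_{\ell}$ is the $\ell$-adic Galois representation. The functoriality of the comparison isomorphism in families — that is, the fact that the analytic family $\mathcal{C} \to Y_d$ and the algebraic family have matching monodromy — is what forces the two actions to correspond under the completion of the above map.

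The technical heart of the argument is identifying the \'{e}tale fundamental group monodromy with the topological monodromy after profinite completion. Concretely, one uses that $\widehat{P}_d$ is simultaneously the profinite completion of $P_d = \pi_1(Y_d, \underline{z}_0)$ and the \'{e}tale fundamental group $\pi_1^{\et}(Y_d, \underline{z}_0)$ (via the Riemann existence theorem / comparison of topological and \'{e}tale $\pi_1$ over $\cc$), and that under this identification the $\ell$-adic \'{e}tale monodromy representation $\rho_{\ell}$ agrees with the $\ell$-adic completion $\rho_{\ell}^{\tp}$ of the topological one. Both representations factor through $\widehat{P}_d$ acting on the same $\zz_{\ell}$-module, and the comparison isomorphism of the previous step intertwines them. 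Assembling these compatibilities yields the desired isomorphism of $\zz_{\ell}$-modules making $\rho_{\ell}^{\tp}$ and $\rho_{\ell}$ isomorphic as representations of $\widehat{P}_d$.

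I expect the main obstacle to be the careful bookkeeping of \emph{basepoints and compatibilities} rather than any single deep input: one must ensure that the generic fiber $C$ and the special fiber $\mathcal{C}_{\underline{z}_0}$ are linked correctly (passing from the generic point to a complex point via specialization, using that $J$ has good reduction away from the discriminant locus, as guaranteed by N\'{e}ron-Ogg-Shafarevich cited above), and that the symplectic structures are matched with consistent sign conventions so that the image lands in $\Sp$ rather than merely $\GSp$. Since all of this is precisely the content of \cite[Proposition 2.2]{yelton2015images}, the most efficient proof simply specializes that proposition to our universal hyperelliptic family, noting that the hypotheses there are met because $\mathcal{C} \to Y_d$ is exactly the family to which it applies.
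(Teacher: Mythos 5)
Your proposal is correct and takes essentially the same approach as the paper: the paper offers no independent proof of this lemma, deferring entirely to \cite[Proposition 2.2]{yelton2015images}, which is exactly the citation you ultimately rest on. Your supplementary sketch --- the classical identification $H_1(\mathcal{C}_{\underline{z}_0}, \zz) \otimes \zz_{\ell} \cong T_{\ell}$ for a complex abelian variety, the Riemann existence comparison of topological and \'{e}tale monodromy through $\widehat{P}_d$, and the specialization linking the generic fiber $J$ to the complex fiber --- is a faithful outline of what that cited proposition establishes, so nothing is missing.
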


We now state and prove some properties of the representation $\rho^{\tp}$ that we will need below.

\begin{prop} \label{prop rho top}

a) The image of $P_d$ under $\rho^{\tp}$ coincides with the principal congruence subgroup $\Gamma(2) \lhd \Sp(H_1(\mathcal{C}_{\underline{z}_0}, \zz))$.

b) If $d$ is odd, we have $\rho^{\tp}(\Sigma) = -1 \in \Gamma(2)$.

c) If $d$ is even, we have $\rho^{\tp}(\Sigma) = 1 \in \Gamma(2)$ and $\rho^{\tp}(\Sigma') = -1 \in \Gamma(2)$, where 
$$\Sigma' = A_{1, 2} (A_{1, 3} A_{2, 3}) ... (A_{1, d - 1} A_{2, d - 1} ... A_{d - 2, d - 1}) \in P_d.$$

\end{prop}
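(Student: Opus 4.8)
The plan is to prove part (a) by a double inclusion and parts (b), (c) by explicitly trivializing the universal family $\mathcal{C} \to Y_d$ over the rigid-rotation loops that represent the central elements $\Sigma$ and $\Sigma'$, identifying the resulting monodromy with a power of the hyperelliptic involution. Throughout write $V = H_1(\mathcal{C}_{\underline{z}_0}, \zz)$.

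For (a), I first treat the inclusion $\rho^{\tp}(P_d) \subseteq \Gamma(2)$. Each half-twist generator $\sigma_i \in B_d$ acts through $\rho^{\tp}$ as the symplectic transvection along the vanishing cycle $c_i$, and the cycles $c_1, \dots, c_{d - 1}$ form a chain with $\langle c_i, c_{i + 1} \rangle = \pm 1$ and $\langle c_i, c_j \rangle = 0$ for $|i - j| \geq 2$. Reducing modulo $2$, the images of the $\sigma_i$ become involutions satisfying the braid and commutation relations, so the reduction $B_d \to \Sp(V \otimes \ff_2)$ factors through the natural quotient $B_d \twoheadrightarrow S_d$. Since $P_d = \ker(B_d \to S_d)$, its image lands in $\ker(\Sp(V) \to \Sp(V \otimes \ff_2)) = \Gamma(2)$. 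For the reverse inclusion I would invoke A'Campo's description of the hyperelliptic braid monodromy (compare the analysis in \cite{yelton2015images}), which identifies $\rho^{\tp}(B_d)$ with the full preimage in $\Sp(V)$ of the image of $S_d$; combined with the injectivity of $S_d \to \Sp(V \otimes \ff_2)$ for $d \geq 5$ (any kernel would be a proper normal subgroup, hence would contain $A_d$, contradicting nonabelianity of the transvection image) together with a direct check for $d = 3$, this forces $\rho^{\tp}(P_d) = \Gamma(2)$. The value $d = 4$, where $S_4 \to \Sp(V \otimes \ff_2) \cong S_3$ has kernel the Klein four-group, is genuinely exceptional and must be bookkept separately; this is the same anomaly flagged in the introduction.

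For (b) and (c) I use that the central generator $\Sigma$ is represented by the loop rigidly rotating the entire configuration, $\underline{z}(t) = (e^{2\pi i t} z_1^0, \dots, e^{2\pi i t} z_d^0)$ with $t \in [0, 1]$. Over this loop the family is trivialized by sending a point $(u, w)$ of the reference fiber $w^2 = \prod_j (u - z_j^0)$ to the point $(x, y) = (e^{2\pi i t} u,\ e^{\pi i d t} w)$ of the fiber $y^2 = \prod_j (x - e^{2\pi i t} z_j^0)$. The monodromy diffeomorphism is the value at $t = 1$, namely $(x, y) \mapsto (x, (-1)^d y)$. For $d$ odd this is the hyperelliptic involution $\iota$, whose induced action on $V$ is $-1$, giving (b); for $d$ even it is the identity, giving $\rho^{\tp}(\Sigma) = 1$, the first assertion of (c). For the remaining assertion of (c) I would move the fixed ($d$-th) branch point to $\infty$ by a M\"obius transformation: the curve then becomes $y^2 = \prod_{i = 1}^{d - 1}(x - \beta_i)$ of odd degree $d - 1$ with a branch point at infinity, and $\Sigma'$ (the generator of the center of the subgroup $P_{d - 1} \subset P_d$) is carried to the rigid-rotation loop of these $d - 1$ finite branch points with $\infty$ held fixed. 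Applying the same trivialization with $d - 1$ in place of $d$ yields monodromy $(x, y) \mapsto (x, (-1)^{d - 1} y) = (x, -y)$, again $\iota$, so $\rho^{\tp}(\Sigma') = -1$; since $-1$ is central in $\Sp(V)$, the scalar is insensitive to the coordinate change and to the identification of homology supplied by Lemma \ref{lemma key}.

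The main obstacle is the reverse inclusion in (a): the surjectivity $\rho^{\tp}(P_d) = \Gamma(2)$ rests on A'Campo's theorem (or the author's prior analysis) rather than on an elementary computation, and the exceptional behavior at $d = 4$ requires separate attention. Parts (b) and (c), by contrast, reduce through the explicit trivialization and the device of sending a fixed branch point to infinity to the single observation that a full $2\pi$ rotation of an odd number of branch points induces the hyperelliptic involution; there the only care needed is to confirm that the rigid-rotation loops genuinely represent $\Sigma$ and $\Sigma'$ as elements of $P_d$.
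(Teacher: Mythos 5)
Your argument is correct in substance, but for parts (b) and (c) it takes a genuinely different route from the paper's. For (a) you and the paper ultimately rest on the same literature (A'Campo \cite{a1979tresses}, Mumford \cite{mumford1984tata}, Yu \cite{yu351toward}); the paper simply cites these, while you add an elementary mod-$2$ transvection argument for the inclusion $\rho^{\tp}(P_d) \subseteq \Gamma(2)$. Note that the cited results give $\rho^{\tp}(P_d) = \Gamma(2)$ for \emph{all} $d$, so the separate bookkeeping you flag at $d = 4$ is an artifact of your particular reduction through the injectivity of $S_d \to \Sp(V \otimes \ff_2)$, not something the statement requires (the introduction's $d = 4$ anomaly concerns $K_1$ versus $k(\alpha_1, \dots, \alpha_d)$, which is related but distinct). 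For (b) the paper argues group-theoretically: from Birman's presentation the exponent-$2$ abelianization of $P_d$ has rank $2g^2 + g$, matching Sato's computation \cite{sato2010abelianization} for $\Gamma(2)$, so $\rho^{\tp}$ induces an isomorphism of exponent-$2$ abelianizations; since $\Sigma$ is the product of all the $A_{i,j}$ its image is nontrivial, and centrality plus part (a) and the fact that $-1$ is the only nontrivial central element of $\Gamma(2)$ force $\rho^{\tp}(\Sigma) = -1$. Your explicit trivialization $(u, w) \mapsto (e^{2\pi i t}u, e^{\pi i d t}w)$ over the rigid-rotation loop is more elementary and geometric, needs neither part (a) nor Sato's result, and yields $\rho^{\tp}(\Sigma) = (-1)^d$ uniformly in the parity of $d$ --- whereas the paper obtains the even-$d$ statements in (c) by a different mechanism, realizing $\mathcal{C} \to Y_d$ as a double cover of the punctured-line family and computing explicitly with the generators of \cite{hasson2017prime}. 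What your approach buys is a one-line conceptual identification of the monodromy with a power of the hyperelliptic involution; what the paper's buys is machinery (the abelianization comparison, the double-cover picture) that is reused elsewhere in \S2--3.

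One step in your treatment of $\Sigma'$ needs more care than the appeal to centrality provides. The fiberwise isomorphism to the odd-degree model, $(x, y) \mapsto \bigl(1/(z_d - x),\ y/(\beta(z_d - x)^{g + 1})\bigr)$ with $\beta^2 = \prod_{i < d}(z_d - z_i)$, is defined only up to the hyperelliptic involution, and as $t$ traverses the loop you must check that a continuous choice of $\beta(t)$ closes up, i.e.\ $\beta(1) = \beta(0)$ rather than $-\beta(0)$. If it failed to close, the even-model monodromy would differ from the odd-model one by composition with $\iota$, a global sign --- which is \emph{not} a conjugation ambiguity and so is not excused by the centrality of $-1$ in $\Sp(V)$. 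The check does succeed: taking $z_d^0$ outside the disk being rotated, each factor $z_d^0 - e^{2\pi i t}z_i^0$ stays in a neighborhood of $z_d^0$ and has winding number $0$ about the origin, so $\beta(t)$ closes up. You should also record explicitly that the image of the rotation loop under the (fixed) M\"obius map, while no longer a rigid rotation, is supported in an embedded disk in $\cc$ containing all $d - 1$ points and is therefore homotopic in $Y_{d - 1}$ to the full-twist loop; since homological monodromy depends only on the homotopy class, this is all your computation needs. With these two points supplied, your proof is complete and correct.
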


\begin{proof}

The statement of (a) has been shown in several works: see \cite[Th\'{e}or\`{e}me 1]{a1979tresses}, \cite[Lemma 8.12]{mumford1984tata}, or \cite[Theorem 7.3(ii)]{yu351toward}.

Assume that $d$ is odd, so $d = 2g + 1$, where $g$ is the genus of $\mathcal{C}_{\underline{z}_0}$.  Then one deduces directly from the presentation of $P_d$ given by \cite[Lemma 1.8.2]{birman1974braids} that the abelianization of $P_d$ is a free $\zz$-module of rank $2g^2 + g$ whose generators are the images of the elements $A_{i, j}$, $1 \leq i < j \leq 2g + 1$; its maximal abelian quotient of exponent $2$ is therefore a $(2g^2 + g)$-dimensional $\ff_2$-vector space generated by the images of the $A_{i, j}$'s.  Meanwhile, it follows directly from \cite[Corollary 2.2]{sato2010abelianization} that the maximal abelian exponent-$2$ quotient of $\Gamma(2)$ also has rank $2g^2 + g$.  It follows that $\rho^{\tp}$ induces an isomorphism between the exponent-$2$ abelianizations of $P_d$ and $\Gamma(2)$.  Now since $\Sigma$ is a product of each of the elements $A_{i, j} \in P_d$, it has nontrivial image in the exponent-$2$ abelianization of $P_d$ and therefore has nontrivial image in $\Gamma(2)$.  Meanwhile, as $\Sigma$ lies in the center of $P_d$ and $\rho^{\tp}(P_d) = \Gamma(2)$ by (a), the image $\rho^{\tp}(\Sigma)$ lies in the center of $\Gamma(2)$.  The only nontrivial central element of $\Gamma(2)$ is the scalar $-1$, proving part (b).

Now assume that $d$ is even.  Note that the family $\mathcal{C} \to Y_d$ is an unramified degree-$2$ cover of the family $Y_{d + 1}' \to Y_d$ whose fiber over each $\underline{z} = (z_1, ... , z_d) \in Y_d$ is $\proj_{\cc}^1 \smallsetminus \{z_1, ... , z_d\}$ ($Y_{d + 1}'$ is essentially the ordered configuration space of cardinality-$(d + 1)$ subsets of $\proj_{\cc}^1$ whose first $d$ elements lie in $\cc$).  This implies that the monodromy action $\rho^{\tp}$ is induced by the monodromy action associated to the family $Y_{d + 1}' \to Y_d$ via the inclusion of and quotients by characteristic subgroups 
\begin{equation} \pi_1(\proj_{\cc}^1(\cc) \smallsetminus \{z_i\}_{1 \leq i \leq d}, \bar{P}_0) \rhd \pi_1(\mathcal{C}_{\underline{z}_0}(\cc) \smallsetminus \{(z_i, 0)\}_{1 \leq i \leq d}, P_0) \twoheadrightarrow \pi_1(\mathcal{C}_{\underline{z}_0}(\cc), P_0) \end{equation}
(here $\bar{P}_0$ is the projection of the baspoint $P_0 \in \mathcal{C}_{\underline{z}_0}$).  In fact, if we let $x_1, ... , x_d$ denote the generators of $\pi_1(\proj_{\cc}^1(\cc) \smallsetminus \{z_i\}_{1 \leq i \leq d}, \bar{P}_0)$ given in \cite[\S4]{hasson2017prime}, then $\pi_1(\mathcal{C}_{\underline{z}_0}(\cc) \smallsetminus \{(z_i, 0)\}_{1 \leq i \leq d}, P_0)$ is the subgroup generated by the elements $x_i x_{i + 1}$ for $1 \leq i \leq d - 1$ and $x_j^2$ for $1 \leq j \leq d$, and $\pi_1(\mathcal{C}_{\underline{z}_0}(\cc), P_0)$ is the quotient of this by the elements $x_j^2$.  Using the fact that the images of the elements $x_i x_{i + 1}$ for $1 \leq i \leq d - 2$ form a $\zz$-basis of the abelianization $H_1(\mathcal{C}_{\underline{z}_0}, \zz)$, one may then explicitly compute the automorphisms of $H_1(\mathcal{C}_{\underline{z}_0}, \zz)$ induced by $\Sigma$ and $\Sigma'$ using the statement and proof of \cite[Lemma 4.1]{hasson2017prime}, thus verifying part (c).  (One can also prove that $\rho^{\tp}(\Sigma) = 1$ using \cite[Lemma 4.2]{hasson2017prime} and the well-known fact that the kernel of Birman's surjection onto the mapping class group coincides with the center of $P_d$.)

\end{proof}

Below, for each integer $N \geq 1$, we fix $\zeta_N \in \cc$ to be the $N$th root of unity given by $e^{2 \pi \sqrt{-1} / N}$.

\begin{lemma} \label{lemma pure braid action abelian}

For any integer $N \geq 1$, the maximal abelian exponent-$N$ subextension of $K^{\unr} / K(\{\alpha_i\}_{1 \leq i \leq d})$ coincides with $K_1(\{\sqrt[N]{\alpha_j - \alpha_i}\}_{1 \leq i < j \leq d})$.  Each standard generator $A_{i, j}$ of $P_d \subset \widehat{P}_d = \Gal(K^{\unr} / K(\{\alpha_i\}_{1 \leq i \leq d})$ acts on this subextension by sending $\sqrt[N]{\alpha_j - \alpha_i}$ to $\zeta_N\sqrt[N]{\alpha_j - \alpha_i}$ and fixing all of the other generators.
\end{lemma}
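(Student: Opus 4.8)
The plan is to reduce the statement to a degree computation together with a monodromy (winding-number) calculation. By infinite Galois theory, the maximal abelian exponent-$N$ subextension of $K^{\unr} / K_1$ is the fixed field of the closed subgroup of $\widehat{P}_d = \Gal(K^{\unr}/K_1)$ generated by all commutators and all $N$-th powers; equivalently, its Galois group over $K_1$ is the maximal abelian exponent-$N$ (continuous) quotient of $\widehat{P}_d$, which coincides with that of the dense subgroup $P_d$, namely $P_d^{\ab} \otimes_{\zz} \zz/N\zz$. From the presentation in \cite[Lemma 1.8.2]{birman1974braids} the abelianization $P_d^{\ab}$ is free of rank $\binom{d}{2}$ on the images of the standard generators $A_{i,j}$ (every braid relation is a conjugation relation and dies in the abelianization), so this quotient is isomorphic to $(\zz/N\zz)^{\binom{d}{2}}$. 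Thus the maximal abelian exponent-$N$ subextension has degree $N^{\binom{d}{2}}$ over $K_1$.

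Next I would exhibit $K_1(\{\sqrt[N]{\alpha_j - \alpha_i}\}_{1 \leq i < j \leq d})$ as a subextension of this field of the full degree. Since each $\alpha_j - \alpha_i$ is a nowhere-vanishing regular function on $Y_d$ and $\mu_N \subset \cc \subseteq K_1$, adjoining $\sqrt[N]{\alpha_j - \alpha_i}$ yields a cyclic Kummer extension of exponent dividing $N$ which is an unramified (étale) cover of $Y_d$; hence the compositum lies in $K^{\unr}$ and is abelian of exponent $N$ over $K_1$, so it is contained in the maximal such subextension. To compute its degree it suffices, by Kummer theory, to show that the classes of the $\alpha_j - \alpha_i$ are independent in $K_1^{\times}/(K_1^{\times})^N$. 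This follows from unique factorization in $\cc[\alpha_1, \ldots, \alpha_d]$: the $\binom{d}{2}$ elements $\alpha_j - \alpha_i$ are pairwise non-associate primes, so any relation $\prod_{i < j} (\alpha_j - \alpha_i)^{e_{i,j}} \in (K_1^{\times})^N$ forces $N \mid e_{i,j}$ for every pair by comparing valuations at the prime $(\alpha_j - \alpha_i)$. Therefore $[K_1(\{\sqrt[N]{\alpha_j - \alpha_i}\}) : K_1] = N^{\binom{d}{2}}$, and comparing with the previous paragraph gives the asserted equality of fields.

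Finally, for the Galois action I would identify the action of the topological loop $A_{i,j}$ on the Kummer generators with monodromy by analytic continuation. The class of $A_{i,j}$ in $H_1(Y_d, \zz) = P_d^{\ab}$ is the basis element dual to the logarithmic form $d\log(\alpha_j - \alpha_i)$; that is, $A_{i,j}$ has winding number $1$ about the diagonal $\{z_i = z_j\}$ and $0$ about every other diagonal. Consequently analytic continuation of $\sqrt[N]{\alpha_j - \alpha_i}$ around $A_{i,j}$ multiplies it by $\zeta_N = e^{2\pi\sqrt{-1}/N}$, while each $\sqrt[N]{\alpha_l - \alpha_m}$ with $\{l, m\} \neq \{i, j\}$ returns to itself, which is exactly the claimed formula. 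I expect this last step to be the main obstacle: one must carefully match the algebraic Galois action on $K^{\unr}$ with the topological monodromy action under the identification $\widehat{P}_d = \Gal(K^{\unr}/K_1)$, pin down the orientation convention for the generators $A_{i,j}$ so as to obtain $\zeta_N$ rather than its inverse, and justify the vanishing of the winding numbers about the intermediate diagonals from the explicit expression for $A_{i,j}$ in the Artin generators.
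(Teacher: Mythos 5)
Your proposal is correct and takes essentially the same route as the paper: the paper disposes of the first statement as ``a standard application of Kummer theory'' (your degree count via $P_d^{\ab} \otimes \zz/N\zz$ together with the valuation-independence of the classes $[\alpha_j - \alpha_i]$ is exactly the standard filling-in), and it proves the action statement by the same analytic-continuation computation, choosing the explicit loop $t \mapsto (1, \ldots, d-1, d-1 + e^{2\pi\sqrt{-1}t}\varepsilon/2)$ and pulling the cover back along a punctured disk to the local model $z \mapsto z^N$. The obstacle you flag at the end is resolved in the paper precisely by the observation you already make in your first paragraph --- the action factors through the abelianization --- which licenses moving the basepoint and replacing $A_{i,j}$ by its standard small-circle conjugacy representative, for which the winding numbers about all other diagonals are manifestly zero and the chosen parametrization pins the orientation so that the eigenvalue is $\zeta_N = e^{2\pi\sqrt{-1}/N}$ rather than its inverse.
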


\begin{proof}

The first statement results from a standard application of Kummer theory.

To prove the second statement, we fix some $N \geq 1$ and assume without loss of generality that $(i, j) = (d - 1, d)$.  Let $Y_d^{(N)} \to Y_d$ be the covering corresponding to the maximal abelian exponent-$N$ subextension of $K^{\unr} / K_1$.  We choose as a topological representative of $A_{i, j}$ the loop given by $t \mapsto (1, ... , d - 1, d - 1 + e^{2\pi\sqrt{-1}t} \varepsilon/2) \in Y_d(\cc)$ for $t \in [0, 1]$, where $\varepsilon$ is a real number satisfying $\min_{1 \leq i \leq d - 2} |z_{d - 1} - z_i| > \varepsilon > 0$ (we are allowed to move the basepoint to $(1, ... , d - 1, d - 1 + \varepsilon/2) \in Y_d(\cc)$ because the monodromy action we are considering factors through the abelianization of the fundamental group).  We have a closed embedding of the punctured disk $B^* := \{z \in \cc \ | \ 0 < |z| < 1\}$ into $Y_d(\cc)$ given by $z \mapsto (1, ... , d - 1, d - 1 + \varepsilon z)$ which takes a loop representing the standard generator of $\pi_1(B^*, 1/2)$ to the loop representing $A_{d - 1, d}$ defined above.  The pullback of the cover $Y_d^{(N)}(\cc) \to Y_d(\cc)$ via $B^* \hookrightarrow Y_d(\cc)$ is clearly homeomorphic to the cover $B^* \to B^*$ given by $z \mapsto z^N$.  Locally, the standard generator of $\pi_1(B^*, 1/2)$ acts on the ring of holomorphic functions defined on the covering space as $\sqrt[N]{z} \mapsto \zeta_N \sqrt[N]{z}$, and the claim follows.

\end{proof}

We are now ready to state and prove the main result of this section which explicitly describes the extension $K_2 / K$ and shows in particular that it is abelian over $K_1$ and therefore contained in $K_{\infty}^{\ab}$.

\begin{thm} \label{thm 4-torsion}

For $1 \leq i, j \leq 2g + 1$, let $\gamma_{i, j}$ be defined as in the statement of Theorem \ref{thm main}.  Then we have $K_2 = K_1(\{\sqrt{\gamma_{i, j}}\}_{1 \leq i < j \leq 2g + 1})$.  Moreover, any Galois automorphism in $G_K$ which acts as multiplication by $-1$ on the subgroup $J[4]$ acts on $K_2$ by fixing $K_1$ and changing the signs of all generators $\sqrt{\gamma_{i, j}}$.

\end{thm}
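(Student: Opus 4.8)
The plan is to identify $\Gal(K_2/K_1)$ with a congruence quotient and then locate $K_2$ inside the maximal abelian exponent-$2$ subextension of $K^{\unr}/K_1$ by Kummer theory. First I would combine the N\'eron--Ogg--Shafarevich reduction with Lemma~\ref{lemma key} and Proposition~\ref{prop rho top}(a): since $\bar\rho_4$ factors through $\widehat{P}_d=\Gal(K^{\unr}/K_1)$ (using $K_1=K(\{\alpha_i\})$, valid for $d\neq 4$) and $\rho^{\tp}(P_d)=\Gamma(2)$, the image of $\bar\rho_4$ is the mod-$4$ reduction $\Gamma(2)/\Gamma(4)$, so $\Gal(K_2/K_1)\cong\Gamma(2)/\Gamma(4)$. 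A routine computation ($I+2M\mapsto JM\bmod 2$, where $J$ is the symplectic form) identifies $\Gamma(2)/\Gamma(4)$ with the space of symmetric $2g\times 2g$ matrices over $\ff_2$; in particular it is an elementary abelian $2$-group of rank $\binom{2g+1}{2}=2g^2+g$. Being abelian of exponent $2$ over $K_1$, $K_2$ is therefore contained in the field $L:=K_1(\{\sqrt{\alpha_j-\alpha_i}\}_{1\le i<j\le d})$ furnished by Lemma~\ref{lemma pure braid action abelian} with $N=2$.

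Next I would pin down $K_2$ inside $L$ by Kummer duality. The Kummer pairing makes $\Gal(L/K_1)=\bigoplus_{i<j}\ff_2\,A_{i,j}$ dual to the subgroup of $K_1^\times/(K_1^\times)^2$ generated by the $\alpha_j-\alpha_i$, with $A_{i,j}$ pairing nontrivially with exactly $\alpha_j-\alpha_i$ (Lemma~\ref{lemma pure braid action abelian}). Under the surjection $\bar\phi:\Gal(L/K_1)\twoheadrightarrow\Gal(K_2/K_1)\cong\Gamma(2)/\Gamma(4)$ the generator $A_{i,j}$ maps to $\rho^{\tp}(A_{i,j})\bmod\Gamma(4)$, which is the rank-one symmetric form $v_{i,j}v_{i,j}^{\mathsf T}$ attached to the vanishing cycle $v_{i,j}\in J[2]$ of the pair $\{i,j\}$ (each $A_{i,j}$ acts on homology as the square of the transvection about $v_{i,j}$). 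Since the functionals on the space of symmetric matrices are exactly the quadratic functions $q$ on $J[2]$ via $vv^{\mathsf T}\mapsto q(v)$, a product $\prod(\alpha_j-\alpha_i)^{e_{i,j}}$ has a square root in $K_2$ if and only if there is a quadratic function $q$ on $J[2]$ with $e_{i,j}=q(v_{i,j})$ for all $i<j$. For $d=2g+1$ we have $\binom{d}{2}=2g^2+g$, so $\bar\phi$ is an isomorphism and $K_2=L=K_1(\{\sqrt{\gamma_{i,j}}\})$ at once.

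For $d=2g+2$ I would use the combinatorial model $J[2]\cong\{\text{even subsets of }\{1,\dots,2g+1\}\}$, taking the representative of each class that avoids the branch point $d$, so that $v_{i,j}$ corresponds to $\{i,j\}$ for $i,j\le 2g+1$ and $v_{i,d}$ to $\{1,\dots,2g+1\}\setminus\{i\}$, and the coordinate functions $\lambda_m(S)=[m\in S]$ are linear. Taking $q_{i,j}:=\lambda_i\lambda_j$, one checks directly that $q_{i,j}(v_{a,b})$ equals the exponent mod $2$ of $(\alpha_b-\alpha_a)$ in $\gamma_{i,j}=(\alpha_j-\alpha_i)\prod_{l\neq i,j}(\alpha_d-\alpha_l)$, which shows $\sqrt{\gamma_{i,j}}\in K_2$; evaluating the $q_{i,j}$ on the classes $\{a,b\}$ shows they are linearly independent, hence (there being $2g^2+g$ of them) a basis of the space of quadratic functions, giving $K_2=K_1(\{\sqrt{\gamma_{i,j}}\})$.

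Finally, for the assertion about $\sigma$ with $\rho_2(\sigma)=-1$: reducing mod $2$ shows $\sigma$ fixes $J[2]$, hence $K_1$. On $K_2$ the element $\sigma$ has image $-I\bmod\Gamma(4)$, which by Proposition~\ref{prop rho top}(b),(c) is also the image of $\Sigma$ (if $d$ is odd) or of $\Sigma'$ (if $d$ is even); since $K_2$ is the fixed field of $\ker\bar\rho_4$, the two act identically on $K_2$. As $\Sigma$ (resp.\ $\Sigma'$) is the product of the $A_{a,b}$ over $1\le a<b\le d$ (resp.\ $\le d-1$), Lemma~\ref{lemma pure braid action abelian} shows it negates $\sqrt{\alpha_b-\alpha_a}$ for each such pair and fixes the others, whence a short check gives that it negates every $\sqrt{\gamma_{i,j}}$ (in the even case the factor $\sqrt{\alpha_j-\alpha_i}$ flips sign while the factors $\sqrt{\alpha_d-\alpha_l}$ are fixed). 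I expect the main obstacle to be the even-$d$ bookkeeping in the middle step: correctly normalizing the $J[2]$-model and verifying that the rank-one forms $v_{i,j}v_{i,j}^{\mathsf T}$ match the quadratic functions $\lambda_i\lambda_j$ dual to the prescribed generators $\gamma_{i,j}$.
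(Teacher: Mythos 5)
Your proposal is correct in substance and, for the even-degree case, takes a genuinely different route from the paper. The shared skeleton (N\'eron--Ogg--Shafarevich plus Lemma~\ref{lemma key} and Proposition~\ref{prop rho top}(a) to get $\Gal(K_2/K_1)\cong\Gamma(2)/\Gamma(4)$ of rank $2g^2+g$, hence $K_2\subseteq K_1(\{\sqrt{\alpha_j-\alpha_i}\})$, with the dimension count settling $d=2g+1$ outright) matches the paper. Where you diverge is $d=2g+2$: the paper never computes the image of the individual generators $A_{i,j}$ mod $\Gamma(4)$; instead it extracts a parity constraint on the Kummer subgroup $H$ from $\rho^{\tp}(\Sigma)=1$ (Proposition~\ref{prop rho top}(c)) and then transports the already-proved odd-degree answer through the isomorphism $C\cong C'$ over the quadratic extension $K(\beta)$, $\beta^2=\prod_{i=1}^{2g+1}(\alpha_{2g+2}-\alpha_i)$, checking that $\beta$ corresponds to an odd sum of generators while each $\gamma_{i,j}$ corresponds to an even one. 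You instead identify $\Gamma(2)/\Gamma(4)$ with symmetric matrices, compute $\bar\phi(A_{a,b})$ as the rank-one form $v_{a,b}v_{a,b}^{\mathsf T}$ attached to the vanishing cycle, identify the Kummer dual with quadratic functions on $J[2]$, and verify that the $q_{i,j}=\lambda_i\lambda_j$ evaluate on the classes $v_{a,b}$ exactly to the exponent vectors of the $\gamma_{i,j}$. I checked this evaluation in both cases ($\{a,b\}\subseteq\{1,\dots,2g+1\}$ and $b=d$) and the independence argument; it is sound, and it is arguably more uniform, since it pins down $H$ exactly without the auxiliary curve. The price is the one input the paper's $\beta$-trick is designed to avoid: that $\rho^{\tp}(A_{i,j})$ is the square of the transvection about the cycle whose mod-$2$ class is the pair $\{i,j\}$ in the combinatorial model of $J[2]$, compatibly with the identification of Lemma~\ref{lemma key}. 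This is standard (it is implicit in \cite{a1979tresses} and in the proof of \cite[Lemma 8.12]{mumford1984tata}, with the $J[2]$ model from \cite[Corollary 2.11]{mumford1984tata}), but you assert it without proof or citation, and it is the only substantive step of your argument not available inside this paper. Your treatment of the $-1$ action (transferring to $\Sigma$ or $\Sigma'$ via equality of images mod $\Gamma(4)$, then applying Lemma~\ref{lemma pure braid action abelian}) is exactly the paper's argument.

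There is one genuine gap in scope: the case $d=4$. You parenthetically note that $K_1=K(\{\alpha_i\})$ is ``valid for $d\neq 4$'' but never return to $d=4$, and for $d=4$ your setup breaks down at the start: the differences $\alpha_j-\alpha_i$ do not lie in $K_1$ (only the $\gamma_{i,j}$ do), so the Kummer field $K_1(\{\sqrt{\alpha_j-\alpha_i}\})$ and the identification $\widehat{P}_d=\Gal(K^{\unr}/K_1)$ are both unavailable, and the extra $\Gal(K(\{\alpha_i\})/K_1)\cong V_4$ would have to be tracked through the whole argument. The theorem as stated covers $d=4$; the paper disposes of all of $g=1$ by citing \cite[Proposition 6(a),(b)]{yelton2017note}, and you would need to do the same (or rework your even-degree computation over $K_1$ rather than $K(\{\alpha_i\})$) for your proof to cover the full statement.
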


\begin{proof}

For $g = 1$, this was already shown by the author as \cite[Proposition 6(a),(b)]{yelton2017note}, so we assume that $g \geq 2$.  In particular, this means that $K_1 = \cc(\{\alpha_i\}_{1 \leq i \leq d})$.  The first statement of the theorem was proved for odd $d$ as \cite[Proposition 3.1]{yelton2015images}, but the following argument proves the full theorem for general $d$.

By \cite[Corollary 1.2(c)]{yelton2015images}, we have that $\rho_2$ induces an isomorphism $\bar{\rho}_4 : \Gal(K_2 / K_1) \stackrel{\sim}{\to} \Gamma(2) / \Gamma(4)$.  We note from the proof of \cite[Corollary 2.2]{sato2010abelianization} that the largest abelian quotient of $\Gamma(2)$ of exponent $2$ is in fact $\Gamma(2) / \Gamma(4) \cong (\zz / 2\zz)^{2g^2 + g}$; therefore, $K_2$ is the maximal abelian subextension of $K_{\infty} / K_1$ of exponent $2$.  Since the extension $K_{\infty} / K_1$ is unramified over all primes of the form $(\alpha_j - \alpha_i)$, such a subextension must be a subfield of $\widetilde{K}_2 := K_1(\{\sqrt{\alpha_j - \alpha_i}\}_{1 \leq i < j \leq d})$.  If $d = 2g + 1$, then $\Gal(\widetilde{K}_2 / K_1)$ already has rank $2g^2 + g$ and therefore $K_2 = \widetilde{K}_2$; moreover, Proposition \ref{prop rho top}(b) combined with Lemma \ref{lemma pure braid action abelian} implies that any Galois element whose image under $\bar{\rho}_4$ is $-1 \in \Gamma(2) / \Gamma(4)$ changes the sign of each $\sqrt{\alpha_j - \alpha_i} = \sqrt{\gamma_{i, j}}$.  This proves the statement in the $d = 2g + 1$ case.

Now suppose that $d = 2g + 2$.  Then $\Gal(\widetilde{K}_2 / K_1)$ has rank $d(d - 1)/2 > 2g^2 + g$, which implies that $K_2 \subsetneq \widetilde{K}_2$.  Kummer theory then tells us that $\Gal(K_2 / K_1)$ is canonically identified with the dual of some subgroup $H \subset K_1^{\times} / (K_1^{\times})^2$, where both are considered as vector spaces over $\ff_2$ of dimension $2g^2 + g$.  Clearly $H$ is a subspace of the space $\widetilde{H} \subset  K_1^{\times} / (K_1^{\times})^2$ (which itself is the dual of $\Gal(\widetilde{K}_2 / K_1)$) generated by images of the elements $\alpha_j - \alpha_i \in K_1^{\times}$; for $1 \leq i < j \leq d$; we write $[\alpha_j - \alpha_i] = [\alpha_i - \alpha_j] \in \widetilde{H}$ for each such image and use additive notation for elements of $\widetilde{H}$.

We now identify $\widetilde{H}$ with its dual $\Gal(\widetilde{K}_2 / K_1)$ via the basis $\{[\alpha_j - \alpha_i]\}_{1 \leq i < j \leq d}$.  Lemma \ref{lemma pure braid action abelian} implies that the image of each $A_{i, j} \in P_d \subset \widehat{P}_d = \Gal(K^{\unr} / K_1)$ under $\rho_2$ composed with reduction modulo $4$ is the one induced by $[\alpha_j - \alpha_i] \in \Gal(\tilde{K}_2 / K_1) = \tilde{H}$.  It follows from Proposition \ref{prop rho top}(c) that each element of $H$ must be the sum of an \textit{even} number of generators of $\widetilde{H}$.

We note that the degree-$(d - 1)$ curve $C' : y'^2 = \prod_{i = 1}^{2g + 1} (x' - 1 / (\alpha_{2g + 2} - \alpha_i))$ is isomorphic to the degree-$d$ curve $C : y^2 = \prod_{i = 1}^{2g + 2} (x - \alpha_i)$ over the quadratic extension $K(\beta) / K$ via the change of variables 
$$(x', y') = (1 / (\alpha_{2g + 2} - x), y / (\beta(\alpha_{2g + 2} - x)^{g + 1})),$$
 where $\beta \in \bar{K}$ is a square root of the element $\prod_{i = 1}^{2g + 1} (\alpha_i - \alpha_{2g + 2})$.  (This is just the isomorphism of hyperelliptic curves induced from an automorphism of the projective line which moves $\alpha_{2g + 2}$ to $\infty$.)  From what was shown above for the odd-degree case, we have 
\begin{equation} K_2(\beta) = K_1(\beta, \{\sqrt{(\alpha_{2g + 2} - \alpha_j)^{-1} - (\alpha_{2g + 2} - \alpha_i)^{-1}}\}_{1 \leq i < j \leq 2g + 1}) = K_1(\beta, \{\beta \sqrt{\gamma_{i, j}}\}_{1 \leq i < j \leq 2g + 1}). \end{equation}
Thus, $K_2(\beta)$ is generated over $K_1(\{\sqrt{\gamma_{i, j}}\}_{1 \leq i < j \leq 2g + 1})$ by the element $\beta$.  Since each $\gamma_{i, j}$ corresponds to the element $[\alpha_j - \alpha_i] + \sum_{l \neq i, j} [\alpha_{2g + 2} - \alpha_l] \in H$, which is the sum of an even number of generators while $\beta^2$ corresponds to $\sum_{i \neq 2g + 2} [\alpha_{2g + 2} - \alpha_i] \in H$, which is not, the extension $K_1(\{\sqrt{\gamma_{i, j}}\}_{1 \leq i < j \leq 2g + 1}) / K_1$ must be the fixed field corresponding to $H \subset \widetilde{H}$ and therefore coincides with $K_2$.  Moreover, if $\Sigma' \in P_{2g + 2}$ is the braid defined as in the statement of Proposition \ref{prop rho top}(c), then that proposition says that $\Sigma'$ corresponds to a Galois automorphism whose restriction to $K_{\infty}$ is $-1 \in \Gamma(2) \cong \Gal(K_{\infty} / K_1)$.  Then Lemma \ref{lemma pure braid action abelian} implies that $\Sigma'$ changes the sign of each $\sqrt{\gamma_{i, j}}$, thus implying that any Galois element whose image under $\bar{\rho}_4$ is $-1 \in \Gamma(2) / \Gamma(4)$ acts in this way, hence the statement in the $d = 2g + 2$ case.

\end{proof}

\section{The maximal abelian subfield over $\cc$} \label{sec3}

\subsection{The abelianization of the Galois group} \label{sec3.1}

We retain our assumption from the last section that $k = \cc$.  Having found a particular abelian subextension of $K_{\infty} / K_1$, namely $K_2 / K_1$, we shall now determine the maximal abelian subextension.  In order to do this, we first need to know what its Galois group over $K_1$ looks like.

\begin{lemma} \label{lemma abelianization}

The abelianization $\Gamma(2)^{\ab}$ of the principal congruence subgroup $\Gamma(2) \lhd \Sp_{2g}(\zz_2)$ is isomorphic to $(\zz / 2\zz)^{2g^2 - g} \times (\zz / 4\zz)^{2g}$ (resp. $\zz / 2\zz \times (\zz / 8\zz)^2$), and the abelianization map $\pi: \Gamma(2) \twoheadrightarrow \Gamma(2)^{\ab}$ factors through $\Gamma(2) / \Gamma(8)$ (resp. $\Gamma(2) / \Gamma(16)$) if $g \geq 2$ (resp. if $g = 1$).
\end{lemma}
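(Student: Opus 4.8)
The plan is to analyze $\Gamma(2)$ through its congruence filtration $\Gamma(2) \supseteq \Gamma(4) \supseteq \Gamma(8) \supseteq \cdots$ and the associated graded Lie algebra. First I would record the standard ``symbol calculus'': each quotient $\Gamma(2^n) / \Gamma(2^{n+1})$ is identified with $\mathfrak{g} := \mathfrak{sp}_{2g}(\ff_2)$ via $1 + 2^n X \mapsto X \bmod 2$, the commutator of two elements with symbols $X, Y$ at levels $m, n$ has symbol $[X, Y]$ at level $m + n$, and squaring carries level $n$ to level $n + 1$ by the identity on symbols for $n \geq 2$ and by a Frobenius-twisted map for $n = 1$. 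Writing $D := [\Gamma(2), \Gamma(2)]$ and $\bar{\Gamma}(2^n)$ for the image of $\Gamma(2^n)$ in $\Gamma(2)^{\ab} = \Gamma(2)/D$, the whole lemma reduces to computing the graded pieces $\bar{\Gamma}(2^n) / \bar{\Gamma}(2^{n+1})$, which are the quotients of $\mathfrak{g}$ by the symbols of $D$ at each level; the two ``factors through'' assertions are exactly the vanishing of $\bar{\Gamma}(8)$ (resp. $\bar{\Gamma}(16)$).

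The key algebraic input I would establish next is the abelianization of the mod-$2$ Lie algebra: $\dim_{\ff_2} \mathfrak{g} / [\mathfrak{g}, \mathfrak{g}] = 2g$. Writing elements of $\mathfrak{sp}_{2g}$ in block form $\left(\begin{smallmatrix} A & B \\ C & A^T \end{smallmatrix}\right)$ with $B, C$ symmetric, a direct entrywise check shows that each of the $2g$ functionals ``$(i,i)$-entry of $B$'' and ``$(i,i)$-entry of $C$'' kills all brackets (the relevant diagonal contributions cancel in pairs over $\ff_2$), so these are Lie-algebra characters; I would then exhibit enough explicit brackets to show that $[\mathfrak{g}, \mathfrak{g}]$ is exactly their common kernel, of dimension $2g^2 - g$. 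This single computation simultaneously explains the $2g^2 - g$ summands of exponent $2$ (the $[\mathfrak{g}, \mathfrak{g}]$-directions) and the $2g$ summands of exponent $4$ or $8$ (the character directions), and it pins down the first two graded pieces: $\Gamma(2)^{\ab} / \bar{\Gamma}(4) \cong \mathfrak{g}$ of rank $2g^2 + g$, and $\bar{\Gamma}(4) / \bar{\Gamma}(8) \cong \mathfrak{g} / [\mathfrak{g}, \mathfrak{g}]$ of rank $2g$, since $D$ reaches exactly $[\mathfrak{g}, \mathfrak{g}]$ at level $1$.

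With the ranks in hand, I would assemble the group structure using the squaring maps as connecting homomorphisms between consecutive graded pieces. Because squaring induces on symbols a map $\mathfrak{g} \to \mathfrak{g}$ whose polarization is the Lie bracket, the induced map $\mathfrak{g} \to \mathfrak{g}/[\mathfrak{g},\mathfrak{g}]$ is $\ff_2$-linear, and a short check shows it is surjective, so every class in $\bar{\Gamma}(4)/\bar{\Gamma}(8)$ is twice a level-$0$ class; iterating, multiplication by $2$ then carries $\bar{\Gamma}(4)/\bar{\Gamma}(8)$ isomorphically onto the next nonzero piece. Tracking a symplectic transvection $t$ whose symbol spans a character direction, this shows $t$ has order $4$ once $\bar{\Gamma}(8) = 0$ and order $8$ once $\bar{\Gamma}(8) \neq 0 = \bar{\Gamma}(16)$, which together with the rank count forces $\Gamma(2)^{\ab} \cong (\zz / 2\zz)^{2g^2 - g} \times (\zz / 4\zz)^{2g}$ (resp. $\zz/2\zz \times (\zz/8\zz)^2$).

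The main obstacle, and the step that genuinely separates the two cases, is controlling the symbols of $D$ at level $2$: I must show that $D \cap \Gamma(8)$ surjects onto all of $\mathfrak{g}$ at level $2$ when $g \geq 2$ (forcing $\bar{\Gamma}(8) = 0$), whereas for $g = 1$ it reaches only $[\mathfrak{g}, \mathfrak{g}]$ at level $2$ and all of $\mathfrak{g}$ only at level $3$. The delicate point is that the missing character directions are not first-order brackets; they arise only as second-order corrections from commutators $[a, b]$ of genuine symplectic elements whose leading symbols commute, so that $[a, b] \in \Gamma(8)$ and one must expand the commutator one step further modulo $16$ (resp. $32$). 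This expansion must be carried out with honest elements of $\Gamma(2) \subset \Sp_{2g}(\zz_2)$ rather than naive matrices $1 + 2X$ (which need not be symplectic), and the reason $g \geq 2$ succeeds at level $2$ while $g = 1$ requires an extra level is precisely that producing a bracket of the form $2 \cdot (\text{character direction})$ requires auxiliary symplectic coordinates unavailable in the rank-$2$ case. I expect this honest second-order commutator computation to be the bulk of the work.
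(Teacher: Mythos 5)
Your proposal is correct in its essentials, but it takes a genuinely different route from the paper. The paper proves almost nothing by hand for $g \geq 2$: it simply cites Sato's computation of $\Gamma(2)^{\ab}$ (his Corollary 2.2), and its only real work is the $g = 1$ case, done by concrete $2 \times 2$ matrix algebra: setting $\sigma = \left(\begin{smallmatrix} 1 & -2 \\ 0 & 1 \end{smallmatrix}\right)$ and $\tau = \left(\begin{smallmatrix} 1 & 0 \\ 2 & 1 \end{smallmatrix}\right)$, it records the closed-form commutator $\sigma^m\tau^n\sigma^{-m}\tau^{-n}$, manufactures $\sigma^8$, $\tau^8$ and the scalar $17$ modulo $32$ to launch an induction giving $[\Gamma(2),\Gamma(2)] \supseteq \Gamma(16)$, obtains the upper bound from the decomposition $\Gamma(2) = \{\pm 1\} \times \langle \sigma, \tau \rangle$ and the centrality of $\sigma\tau\sigma^{-1}\tau^{-1}$ modulo $16$ (so the derived subgroup is cyclic of order $4$ there), and concludes by order-counting in $\Gamma(2)/\Gamma(16)$. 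Your symbol-calculus plan instead re-derives Sato's theorem uniformly, and its checkpoints are accurate: $\dim_{\ff_2} \mathfrak{g}/[\mathfrak{g},\mathfrak{g}] = 2g$ with the diagonal $B$- and $C$-entries as characters; squaring induces $X \mapsto X + X^2$, surjective onto $\mathfrak{g}/[\mathfrak{g},\mathfrak{g}]$ because the diagonal elementary $B$/$C$ matrices are square-zero; and your diagnosis of the dichotomy is exactly right — for $g \geq 2$ a bracket can equal twice a character direction (e.g.\ with $X$ having $A$-block $e_{12}$ and $Y$ having $B$-block $E_{12} + E_{21}$, the $B$-block of $[X,Y]$ is $2E_{11}$, and in fact $(1+2X)(1+2Y)(1+2X)^{-1}(1+2Y)^{-1} = 1 + 8\,(B\text{-block }E_{11})$ exactly, with both factors honestly symplectic), while for $g = 1$ the analogous attempts only produce $u^2 - 1 \equiv 0 \pmod 8$ and push the characters down one more level, matching your prediction. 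Two points to tighten. First, a symplectic transvection does not lie in $\Gamma(2)$; you should track the \emph{square} of a transvection (the element $\sigma$ above). Second, and more substantively, your plan states but does not explain how to prove the upper bounds (``reaches \emph{only} $[\mathfrak{g},\mathfrak{g}]$'' at the relevant level for $g=1$): exhibiting commutators bounds the symbols of $D$ only from below, and a general element of $D$ is a product of commutators whose leading symbols can cancel. This is rescued inside your own framework — $\Gamma(4)/\Gamma(16)$ is abelian, so the image of $D$ mod $16$ is generated by single commutators, and for $g = 1$ conjugation acts trivially on that image because $[\mathfrak{g},[\mathfrak{g},\mathfrak{g}]] = 0$, making the commutator map bimultiplicative mod $16$ and reducing everything to $[\sigma,\tau]$ — which is precisely the symbol-calculus shadow of the paper's centrality trick, but it needs to be said. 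In exchange for the heavier second-order computations you rightly predict to be the bulk of the work, your route is self-contained where the paper leans on a citation, and it explains conceptually what the paper's computation only exhibits: the $2g$ higher-exponent cyclic factors are the character directions of $\mathfrak{sp}_{2g}(\ff_2)$, and the jump from exponent $4$ to exponent $8$ at $g = 1$ reflects the absence of auxiliary symplectic coordinates (equivalently, that odd squares are $1$ mod $8$).
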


\begin{proof}

The description of $\Gamma(2)^{\ab}$ for the $g \geq 2$ case is given by \cite[Corollary 2.2]{sato2010abelianization}.  We therefore assume that $g = 1$ and proceed to compute the commutator subgroup $[\Gamma(2), \Gamma(2)] \lhd \Gamma(2)$.

We first claim that $[\Gamma(2), \Gamma(2)]$ contains $\Gamma(16)$.  Write $\sigma = \begin{bmatrix} 1 & -2 \\ 0 & 1 \end{bmatrix}$ and $\tau = \begin{bmatrix} 1 & 0 \\ 2 & 1 \end{bmatrix}$.  We verify by straightforward computation that for any integers $m, n \geq 1$, we have the formula 
\begin{equation} \label{eq commutator} \sigma^m \tau^n \sigma^{-m} \tau^{-n} = \begin{bmatrix} 1 - 2^{m + n} & 2^{2m + n} \\ -2^{m + 2n} & 1 + 2^{m + n} + 2^{2m + 2n} \end{bmatrix} \in [\Gamma(2), \Gamma(2)]. \end{equation}
Using this formula, we compute $(\sigma^2\tau\sigma^{-2}\tau^{-1})(\sigma\tau\sigma^{-1}\tau^{-1})^2 \equiv \tau^8$, $(\sigma\tau^2\sigma^{-1}\tau^{-2})(\sigma\tau\sigma^{-1}\tau^{-1})^2 \equiv \sigma^8$, and $(\sigma\tau\sigma^{-1}\tau^{-1})^4 \equiv 17$ modulo $32$.  It is easy to show that for $n \geq 1$, the images modulo $2^{n + 1}$ of $\sigma^{2^{n - 1}}$, $\tau^{2^{n - 1}}$, and the scalar matrix $1 + 2^n$ generate $\Gamma(2^n) / \Gamma(2^{n + 1}) \cong (\zz / 2\zz)^3$; thus, in particular, $[\Gamma(2), \Gamma(2)]$ contains $\Gamma(16)$ modulo $32$.  Now we show by induction that for each $n \geq 5$, $[\Gamma(2), \Gamma(2)]$ contains $\Gamma(2^n)$ modulo $2^{n + 1}$, which suffices to prove that $[\Gamma(2), \Gamma(2)] \supset \Gamma(16)$.  Assume this is the case for $n - 1$; then in particular, $[\Gamma(2), \Gamma(2)]$ contains elements which are equivalent modulo $2^n$ to $\sigma^{2^{n - 2}}$, $\tau^{2^{n - 2}}$, and $1 + 2^{n - 1}$.  On computing that the squares of such elements must be equivalent modulo $2^{n + 1}$ to $\sigma^{2^{n - 1}}$, $\tau^{2^{n - 1}}$, and $1 + 2^n$ respectively, the claim is proven.

We next claim that the image of $[\Gamma(2), \Gamma(2)]$ modulo $16$ is cyclically generated by the image modulo $16$ of $\sigma\tau\sigma^{-1}\tau^{-1}$.  To see this, we recall the well-known fact that $\Gamma(2)$ decomposes as the direct product of $\{\pm 1\}$ and the subgroup generated by $\sigma$ and $\tau$ and therefore, $[\Gamma(2), \Gamma(2)]$ coincides with the commutator subgroup of $\langle \sigma, \tau \rangle \lhd \Gamma(2)$.  On checking that $\sigma\tau\sigma^{-1}\tau^{-1}$ commutes with both $\sigma$ and $\tau$ modulo $16$, we deduce as an easy exercise in group theory that the commutator of any two elements in $\Gamma(2) / \Gamma(16)$ is a power of the image of $\sigma\tau\sigma^{-1}\tau^{-1}$.  Since the smallest normal subgroup of $\Gamma(2) / \Gamma(16)$ containing these powers is simply the cyclic subgroup generated by the image of $\sigma\tau\sigma^{-1}\tau^{-1}$, we have proven the claim.

Now it follows from the fact that $\Gamma(2^n) / \Gamma(2^{n + 1}) \cong (\zz / 2\zz)^3$ for $n \geq 1$ that $\Gamma(2) / \Gamma(16)$ has order $2^9 = 512$; meanwhile, we see from what we have computed above that the image of $\sigma\tau\sigma^{-1}\tau^{-1}$ modulo $16$, which generates the image of $[\Gamma(2), \Gamma(2)]$, has order $4$.  Therefore, $\Gamma(2)^{\ab}$ has order $128$.  Since $\Gamma(2)^{\ab}$ is generated by the images of $-1$, $\sigma$, and $\tau$ modulo $[\Gamma(2), \Gamma(2)]$, the first statement of the lemma follows from an easy verification (by considering the image of $[\Gamma(2), \Gamma(2)]$ modulo $16$) that the images of $\sigma$ and $\tau$ each have order $8$ in $\Gamma(2)^{\ab}$.

\end{proof}

In order to find the extension of $K_1$ corresponding to the Galois quotient described by the lemma, we consider the $g \geq 2$ and $g = 1$ cases separately.

\subsection{The $g \geq 2$ case}

Lemma \ref{lemma abelianization}, together with the results of \S\ref{sec2} and the fact that $K_{\infty} \subset K^{\unr}$, implies that $K_{\infty}^{\ab}$ is an extension of $K_2 = K_1(\{\sqrt{\gamma_{i, j}}\}_{1 \leq i < j \leq 2g + 1})$ obtained by adjoining $2g$ independent $4$th roots of products of the elements $(\alpha_j - \alpha_i)$.  Similarly to what we saw in the proof of Theorem \ref{thm 4-torsion}, Kummer theory tells us that $\Gal(K_{\infty}^{\ab} / K_2)$ is canonically identified with the dual of some subgroup $V \subset K_2^{\times} / (K_2^{\times})^2$, where both are considered as vector spaces over $\ff_2$ of dimension $2g$.  In fact, since $K_{\infty} \subset K^{\unr}$, the first statement of Lemma \ref{lemma pure braid action abelian} implies that $V$ is also a subspace of the space $\widetilde{V}$ generated by the images in $\widetilde{K}_2^{\times} / (\widetilde{K}_2^{\times})^2$ of elements of the form $\sqrt{\alpha_j - \alpha_i}$, where $\widetilde{K}_2 = K_2(\{\sqrt{\alpha_j - \alpha_i}\}_{1 \leq i < j \leq d})$ as in the proof of Theorem \ref{thm 4-torsion}.  For ease of notation, we denote each of these images by $[i, j] = [j, i] \in \widetilde{V}$.  We now proceed to explicitly determine the $2g$-dimensional subspace $V \subset K_2^{\times} / (K_2^{\times})^2 \cap \widetilde{V}$.

Note that since $[\Gamma(2), \Gamma(2)]$ is a characteristic subgroup of $\Gamma(2)$, which in turn is normal in $\Sp(T_2(J))$, we have $[\Gamma(2), \Gamma(2)] \lhd \Sp(T_2(J))$.  In particular, $[\Gamma(2), \Gamma(2)]$ is a normal subgroup of the image $G_2 \subset \Sp(T_2(J))$ of $\rho_2$, so $K_{\infty}^{\ab}$ is Galois over $K$ and the action of $G_2$ on $\Gamma(2)$ by conjugation induces an action of $G_2$ on $\Gamma(2)^{\ab}$.  This induces an $\ff_2$-linear action of $G_2$ on $V$.  It follows from a straightforward calculation that this action sends an element $\sigma \in G_2$ to the automorphism of $V$ determined by $[i, j] \mapsto [\bar{\sigma}(i), \bar{\sigma}(j)]$ for $1 \leq i < j \leq d$, where the permuataion $\bar{\sigma}$ is the image of $\sigma$ in $G_2 / \Gamma(2) \cong \Gal(K_1 / K) = S_d$.  We therefore have an action of $S_d$ on $V$, which we denote by $\psi : S_d \to \Aut(V)$.

As $V$ is a vector space over $\ff_2$ of dimension $2g$, one candidate for the action $\psi$ is the well-known \textit{standard representation} of $S_d$ over $\ff_2$ (see \cite[\S2.2]{wagner1976faithful} for the construction of the standard representations over characteristic $2$ of dimension $d - 1$ if $d$ is odd and of dimension $d - 2$ if $d$ is even).  In our situation, this turns out to be the case.

\begin{lemma} \label{lemma standard rep}

The $2g$-dimensional representation $(V, \psi)$ of $S_d$ defined above is isomorphic to the standard representation.

\end{lemma}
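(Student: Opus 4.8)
The plan is to determine $(V, \psi)$ not by computing the subspace $V \subset \widetilde{V}$ explicitly, but by recognizing it abstractly as the $2$-torsion module $J[2]$ with its natural $S_d$-action, and then invoking the classical description of the $2$-torsion of a hyperelliptic Jacobian. First I would record the group-theoretic identification of $V$. By Galois theory $\Gal(K_{\infty}^{\ab} / K_1) \cong \Gamma(2)^{\ab}$ and $\Gal(K_2 / K_1) \cong \Gamma(2) / \Gamma(4)$, so $\Gal(K_{\infty}^{\ab} / K_2)$ is the kernel of the reduction $\Gamma(2)^{\ab} \twoheadrightarrow \Gamma(2) / \Gamma(4)$; since (by Lemma \ref{lemma abelianization}) $\Gamma(2) / \Gamma(4)$ is the maximal exponent-$2$ quotient $\Gamma(2)^{\ab} / 2\Gamma(2)^{\ab}$, this kernel is exactly the order-$4$ part $2\Gamma(2)^{\ab} \cong (\zz / 2\zz)^{2g}$. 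As the action $\psi$ is by construction induced from the conjugation action of $G_2$ on $\Gamma(2)^{\ab}$ (which factors through $G_2 / \Gamma(2) \cong S_d$), we obtain $(V, \psi) \cong 2\Gamma(2)^{\ab}$ as $\ff_2[S_d]$-modules, these modules being self-dual so that the Kummer duality is harmless.

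The heart of the argument is then to identify $2\Gamma(2)^{\ab}$ with the natural symplectic module $J[2]$. I would use the standard isomorphisms $\Gamma(2^n) / \Gamma(2^{n + 1}) \cong \mathfrak{sp}_{2g}(\ff_2) \cong \mathrm{Sym}^2(J[2])$ of $\Sp(J[2])$-modules, together with the characteristic-$2$ short exact sequence $0 \to J[2] \to \mathrm{Sym}^2(J[2]) \to \wedge^2(J[2]) \to 0$ whose first map is the Frobenius (squaring) embedding $v \mapsto v^2$. The squaring homomorphism $x \mapsto x^2$ on $\Gamma(2)$ sends $1 + 2X$ to $1 + 4(X + X^2)$ and thereby exhibits the order-$4$ elements of $\Gamma(2)^{\ab}$; checking that their classes span precisely the Frobenius summand $J[2] \subset \mathrm{Sym}^2(J[2])$ then gives $2\Gamma(2)^{\ab} \cong J[2]$ as $\Sp(J[2])$-modules. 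For $g \geq 2$ this is exactly the content of \cite[Corollary 2.2]{sato2010abelianization}, which I would cite. Restricting along $S_d \hookrightarrow \Sp(J[2])$ yields $(V, \psi) \cong J[2]$.

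It remains to recognize $J[2]$ as the standard representation. For the curve $y^2 = \prod_{i = 1}^d (x - \alpha_i)$ the group $J[2]$ is classically identified, as an $S_d$-module, with the space of even-cardinality subsets of the set of Weierstrass points modulo the full set, on which $S_d$ acts by permuting the branch points $\alpha_i$ (see, e.g., \cite{mumford1984tata}). A short computation, keeping track of the point at infinity when $d = 2g + 1$, shows that this module is $W_0$ (when $d$ is odd) resp. $W_0 / \langle s \rangle$ (when $d$ is even), where $W_0 \subset \ff_2^d$ is the even-weight subspace and $s = \sum_i e_i$; by the construction recalled in \cite[\S2.2]{wagner1976faithful} these are precisely the standard representation $D^{(d - 1, 1)}$, of dimension $2g$. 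Since the $S_d$-action by permutation of branch points is visibly the action $[i, j] \mapsto [\bar{\sigma}(i), \bar{\sigma}(j)]$ defining $\psi$, combining the three steps gives $(V, \psi) \cong D^{(d - 1, 1)}$, as claimed.

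I expect the main obstacle to be the middle step: pinning down the $\Sp(J[2])$-module structure of the order-$4$ part, rather than merely its isomorphism type as an abelian group. The difficulty is to control the interaction between the squaring map and the commutator subgroup $[\Gamma(2), \Gamma(2)]$ inside $\Gamma(2) / \Gamma(8)$ well enough to see that the surviving quotient is the Frobenius (natural) summand of $\mathrm{Sym}^2(J[2])$ and not a constituent of $\wedge^2(J[2])$; this is where Sato's computation does the real work. As a concrete alternative that fits the framework of $\widetilde{V}$ set up above, one could instead introduce the $S_d$-equivariant incidence map $\phi \colon \widetilde{V} \to D^{(d - 1, 1)}$ given by $[i, j] \mapsto \overline{e_i + e_j}$ and reduce the lemma to the single assertion $V \not\subseteq \ker \phi$: since $D^{(d - 1, 1)}$ is irreducible of dimension $2g = \dim_{\ff_2} V$, any nonzero image forces $\phi|_V$ to be an isomorphism. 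That nonvanishing is supplied by the module identification of the middle step.
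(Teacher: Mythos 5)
Your proof is correct in outline but takes a genuinely different route from the paper's. The paper argues representation-theoretically: it first shows $(V, \psi)$ is faithful (via $d \geq 5$ and the $2$-transitivity of $A_d$), then for $g \geq 3$ invokes Wagner's classification of low-dimensional faithful $\ff_2$-representations of $S_d$ \cite[Theorem 1.1]{wagner1976faithful}, and for $g = 2$ --- where that theorem does not apply --- checks by hand, using the basis of $\Gamma(4) / [\Gamma(2), \Gamma(2)]$ given by the fourth powers of the transvections $T_{a_i}, T_{b_i}$, that transpositions act as transvections on $V$, so that \cite[Lemma 3.2]{wagner1976faithful} applies. You instead identify the module outright: $\Gal(K_{\infty}^{\ab} / K_2) \cong \Gamma(4) / [\Gamma(2), \Gamma(2)]$ is identified with the natural $\Sp$-module $J[2]$, and the classical model of hyperelliptic $2$-torsion (even-cardinality subsets of branch points modulo complementation, as in \cite{mumford1984tata}) then exhibits the standard representation, with the $S_d \hookrightarrow \Sp(J[2])$ embedding being exactly the mod-$2$ Galois action. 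Your route is uniform in $g \geq 2$, avoids Wagner's classification theorem and the ad hoc $g = 2$ computation, and explains conceptually why the standard representation appears; the paper's route avoids having to pin down the $\Sp(J[2])$-module structure of the order-$4$ part, needing only an $\ff_2$-basis of it.

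Two points need tightening. First, the middle step is not ``exactly the content'' of \cite[Corollary 2.2]{sato2010abelianization}, which gives only the abstract group structure $\Gamma(2)^{\ab} \cong (\zz / 2\zz)^{2g^2 - g} \times (\zz / 4\zz)^{2g}$; you need \cite[Proposition 2.1]{sato2010abelianization} (the commutator subgroup consists of the matrices in $\Gamma(4)$ whose $(i, i + g)$th and $(i + g, i)$th entries are divisible by $8$) together with the short computation that, writing an element of $\Gamma(4)$ as $1 + 4X$ with $S = JX$ symmetric, conjugation acts by congruence $S \mapsto h S h^t$ with $(h S h^t)_{ii} \equiv \sum_j h_{ij} S_{jj} \pmod{2}$, so the surviving diagonal quotient transforms as the (Frobenius twist of the) natural module, the twist being trivial over $\ff_2$. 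Note also that the symmetric-matrix model realizes the divided square (the dual of $\mathrm{Sym}^2(J[2])$), in which $\wedge^2(J[2])$ is the zero-diagonal \emph{submodule} and $J[2]$ the \emph{quotient}; your ``Frobenius summand $J[2] \subset \mathrm{Sym}^2(J[2])$'' interchanges sub and quotient, though the Weil-pairing self-duality of $J[2]$ (which you also invoke, correctly, to dispose of the Kummer duality between $V$ and the Galois group) leaves the conclusion unaffected. Second, your alternative finish via the incidence map $\phi$ is under-justified as stated: the abstract isomorphism $V \cong D^{(d - 1, 1)}$ does not by itself rule out $V \subseteq \ker \phi$, since $D^{(d - 1, 1)}$ can occur inside $\ker \phi$ as a constituent of the permutation module $\widetilde{V}$; but that aside is not needed once the main line is in place.
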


\begin{proof}

We fix a symplectic ordered basis $\{a_1, ... , a_g, b_1, ... , b_g\}$ of $T_2(J)$, i.e. a basis satisfying $e_2(a_i, b_i) = -1 \in \zz_2$ for $1 \leq i \leq g$ and $e_2(a_i, b_j) = 0$ for $1 \leq i < j \leq g$ and such that the image of each $a_i$ (resp. each $b_i$) in $J[2]$ is represented by the even-cardinality subset of roots given by $\{\alpha_{2i - 1}, \alpha_{2i}\}$ (resp. $\{\alpha_{2i}, ... , \alpha_{2g + 1}\}$) (see the statement and proof of \cite[Corollary 2.11]{mumford1984tata} for a description of the elements in $J[2]$ in terms of the roots $\alpha_i$).  In the following argument, we use the description of $[\Gamma(2), \Gamma(2)]$ given by \cite[Proposition 2.1]{sato2010abelianization} as the subgroup of matrices (with respect to our symplectic basis) in $\Sp(T_2(J))$ which lie in $\Gamma(4)$ and whose $(i, i + g)$th and $(i + g, i)$th elements are divisible by $8$ for $1 \leq i \leq g$.  Note in particular that the automorphisms given by $v \mapsto v + 4e_2(v, a_i) a_i$ and $v \mapsto v + 4e_2(v, b_i) b_i$ each have $\pm 4$ as their $(i, i + g)$th or $(i + g, i)$th entries respectively, so their images in $\Gamma(4) / [\Gamma(2), \Gamma(2)]$ are nontrivial and distinct and in fact form a basis for the $\ff_2$-space $\Gamma(4) / [\Gamma(2), \Gamma(2)]$.

We claim that the representation $(V, \psi)$ is faithful.  Indeed, suppose that $\psi$ has nontrivial kernel.  Since $d \geq 5$, this implies that the kernel of $\psi$ contains $A_d \lhd S_d$.  Consider any element of $V$, written as a linear combination $\sum_{1 \leq i < j \leq 2g + 1} c_{i, j} [i, j] \in V$ with $c_{i, j} \in \ff_2$; by our assumptions this element must be fixed by $A_d$.  But then the $2$-transitivity of $A_d$ implies that the $c_{i, j}$'s are all equal, so that $V$ is spanned by the element $\sum_{1 \leq i < j \leq d} [i, j]$, which contradicts the fact that $V$ is $2g$-dimensional.

If $g \geq 3$, then \cite[Theorem 1.1]{wagner1976faithful} implies that $(V, \psi)$ is isomorphic to the standard representation, and we are done.  We therefore assume that $g = 2$.  It follows from the statement and proof of \cite[Lemma 3.2(iii),(iv)]{wagner1976faithful} that $(V, \psi)$ is isomorphic to the standard representation if and only if the transpositions in $S_d$ map to transvections in $\Aut(V)$ (in this context a transvection is defined to be any operator $A \in \Aut(V)$ such that $A - 1$ has rank $1$).  It therefore suffices to show that the transposition $(1 2) \in S_d$ acts on $V$ as a transvection.  This is equivalent to the claim that any element of $G_2$ whose image modulo $2$ is $(1 2) \in S_d = \Gal(K_1 / K)$ acts by conjugation on $\Gamma(4) / [\Gamma(2), \Gamma(2)] \cong \Gal(K_{\infty}^{\ab} / K_2)$ (which is the dual of $V$) as a transvection.  For any $a \in T_2(J)$, let $T_a \in \Aut(V)$ denote the automorphism given by $v \mapsto v + e_2(v, a) a$.  Then we see from the description of elements of $J[2]$ in terms of subsets of the set of $\alpha_i$'s which was mentioned above that the image of $T_{a_1} \in \Sp(T_2(J))$ modulo $2$ is $(1 2)$; since (as noted above) $\{T_{a_1}^4, T_{a_2}^4, T_{b_1}^4, T_{b_2}^4\}$ is an $\ff_2$-basis of $\Gamma(4) / [\Gamma(2), \Gamma(2)]$, we only need to calculate the conjugates of each of these basis elements by $T_{a_1}$.  We compute that $T_{a_1}$ commutes with each of them except for $T_{b_1}^4$, and that 
\begin{equation} T_{a_1} T_{b_1}^4 T_{a_1}^{-1} \equiv T_{b_1}^4 T_{a_1}^4 \ (\textrm{mod} \ [\Gamma(2), \Gamma(2)]). \end{equation}
Thus, we have seen that $T_{a_1}$ minus the identity operator acts on the $\ff_2$-space $\Gamma(4) / [\Gamma(2), \Gamma(2)]$ by sending all basis elements to $0$ except for $T_{b_1}^4$, which it sends to $T_{a_1}^4$.  Therefore, $T_{a_1}$ acts as a transvection, as desired.

\end{proof}

Now the following proposition suffices to prove Theorem \ref{thm main}(a) when $k = \cc$.

\begin{prop} \label{prop main g>1}

The subspace $V$ defined above is generated by the images in $\widetilde{V}$ of the elements $\prod_{j \neq i} \sqrt{\gamma_{i, j}} \in K_2^{\times}$ for $1 \leq i \leq 2g + 1$.

\end{prop}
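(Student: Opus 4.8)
The plan is to carry out the entire analysis inside the permutation module $\widetilde{V}$ and to pin down $V$ as the \emph{unique} copy of the standard representation sitting inside it. Recall that $\widetilde{V}$ has $\ff_2$-basis $\{[i,j]\}_{1 \le i < j \le d}$ with $S_d$ acting by $[i,j] \mapsto [\sigma(i), \sigma(j)]$; thus $\widetilde{V} \cong \mathrm{Ind}_{S_2 \times S_{d-2}}^{S_d} \mathbf{1}$ is the permutation module on $2$-element subsets of $\{1, \dots, d\}$. I would introduce the $S_d$-equivariant ``star'' map $\delta \colon \ff_2^d \to \widetilde{V}$, $e_i \mapsto \sum_{j \ne i} [i,j]$, whose kernel is the line spanned by $\mathbf{1} = \sum_i e_i$. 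The first step is to rewrite the classes of the proposed generators in terms of $\delta$: when $d = 2g+1$ one has $[\prod_{j \ne i} \sqrt{\gamma_{i,j}}] = \delta(e_i)$, while when $d = 2g+2$ a short computation using $\gamma_{i,j} = (\alpha_j - \alpha_i)\prod_{l \ne i,j}(\alpha_d - \alpha_l)$ shows the class equals $\delta(e_i + e_d)$ (the factors $(\alpha_d - \alpha_l)$ contribute the double sum $\sum_{j \ne i}\sum_{l \ne i,j}[d,l]$, in which each $[d,l]$ with $l \ne i$ occurs an odd number $2g-1$ of times and so reduces modulo $2$ to $\sum_{l \ne i}[d,l]$, exactly accounting for the difference between $[\prod_{j\ne i}\sqrt{\gamma_{i,j}}]$ and $\delta(e_i)$). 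Let $W \subseteq \widetilde{V}$ denote the span of these classes.

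Next I would identify $W$ as a submodule isomorphic to the standard representation. In the odd case $W = \delta(\ff_2^d) \cong \ff_2^d / \langle \mathbf{1} \rangle$, the standard representation of dimension $d - 1 = 2g$; in the even case the vectors $e_i + e_d$ for $1 \le i \le 2g+1$ span the sum-zero subspace $U \subset \ff_2^d$, so $W = \delta(U) \cong U / \langle \mathbf{1} \rangle$ (here $\mathbf{1} \in U$ since $d$ is even), again the standard representation, of dimension $d - 2 = 2g$. In both cases $W$ is $S_d$-stable because $\delta$, $\ff_2^d$, and $U$ are, and $\dim_{\ff_2} W = 2g$. Each generator genuinely lies in $K_2^\times$ by Theorem \ref{thm 4-torsion}, so $W$ is a legitimate subspace of $K_2^{\times}/(K_2^{\times})^2 \cap \widetilde{V}$.

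Finally I would deduce $V = W$ from a multiplicity-one statement. By Lemma \ref{lemma standard rep} the module $V$ is itself an $S_d$-submodule of $\widetilde{V}$ isomorphic to the standard representation (written $\mathrm{std}$ below), so it suffices to show that $\widetilde{V}$ contains a unique such submodule. Since $\mathrm{std}$ is absolutely irreducible over $\ff_2$, this is equivalent to $\dim_{\ff_2} \Hom_{S_d}(\mathrm{std}, \widetilde{V}) = 1$. By Frobenius reciprocity this Hom-space is $\Hom_{S_2 \times S_{d-2}}(\mathrm{Res}\, \mathrm{std}, \mathbf{1})$, whose dimension, using the self-duality of $\mathrm{std}$, equals $\dim_{\ff_2} (\mathrm{std})^{S_2 \times S_{d-2}}$; a direct computation of the $S_2 \times S_{d-2}$-invariants gives exactly $1$ in both parities (spanned by the image of $e_1 + e_2$). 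Hence $V$ and $W$, being two copies of $\mathrm{std}$ inside $\widetilde{V}$, must coincide, which is the assertion of the proposition.

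The main obstacle is this last step: because the characteristic $2$ divides $|S_d|$, the module $\widetilde{V}$ is not semisimple and one cannot simply split off an isotypic component, so the uniqueness of the standard submodule must be extracted from the Frobenius-reciprocity and invariants computation rather than from complete reducibility. A secondary difficulty is the even-degree bookkeeping needed to verify $[\prod_{j \ne i}\sqrt{\gamma_{i,j}}] = \delta(e_i + e_d)$; one could instead bypass it by transporting the (cleaner) odd-degree result across the quadratic twist $K(\beta)/K$ of Theorem \ref{thm 4-torsion}, under which the degree-$(2g+2)$ curve becomes the degree-$(2g+1)$ curve and the generators $\prod_{j\ne i}\sqrt{\gamma_{i,j}}$ correspond to the stars of the odd-degree model.
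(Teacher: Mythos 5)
Your proof is correct, and while it shares the paper's skeleton --- the Kummer-theoretic identification of $V$ inside $\widetilde{V}$ and the crucial input of Lemma \ref{lemma standard rep}, which makes $V$ an $S_d$-submodule of $\widetilde{V}$ isomorphic to the standard representation --- it finishes by a genuinely different mechanism. The paper pins down $V$ by hand: it takes the canonical spanning vectors $v_i$ (resp.\ $v_{i,j}$) of the standard representation, constrains their expansions in the basis $\{[s,t]\}$ by invariance under the stabilizer of the relevant indices, and eliminates the spurious candidates using the relations $\sum_i v_i = 0$ and $v_{s,t} + v_{s,j} = v_{t,j}$, with separate and somewhat ad hoc checks in the even-degree case. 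You instead prove an abstract multiplicity-one statement, $\Hom_{S_d}(\mathrm{std}, \widetilde{V}) \cong \Hom_{S_2 \times S_{d-2}}(\mathrm{Res}\,\mathrm{std}, \mathbf{1}) \cong (\mathrm{std})^{S_2 \times S_{d-2}}$, which is one-dimensional (spanned by the image of $e_1 + e_2$) in both parities, and then exhibit the unique copy of $\mathrm{std}$ as the image of the star map $\delta$; your bookkeeping is right, since in the even case each $[d,l]$ with $l \neq i$ occurs $2g-1$ times (odd), giving $[\prod_{j \neq i}\sqrt{\gamma_{i,j}}] = \delta(e_i + e_d)$ after the cancellation of $[i,d]$ against $[d,i]$, and your proposed bypass via the twist $K(\beta)/K$ mirrors the device already used in the proof of Theorem \ref{thm 4-torsion}. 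You also correctly identify and circumvent the real danger: $\widetilde{V}$ is not semisimple in characteristic $2$, but the reciprocity count is an exact equality and requires no splitting; moreover, over $\ff_2$ a one-dimensional Hom space contains literally one nonzero map, so uniqueness of the submodule is immediate without even invoking absolute irreducibility (the nonzero map must be injective, since $W$ witnesses the existence of an embedded copy). Two small points deserve a sentence in a polished write-up: the $S_d$-equivariant self-duality of $\mathrm{std}$ in characteristic $2$ (the permutation pairing on $\ff_2^d$ restricts to the sum-zero subspace $U$ with radical $\langle \mathbf{1}\rangle$ when $d$ is even and descends nondegenerately), and, in the even case, that $H$-invariants of $U/\langle \mathbf{1}\rangle$ lift to $U$ (if $hv - v \in \{0, \mathbf{1}\}$, then any coordinate fixed by $h$ forces $hv = v$). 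In sum, the paper's route buys complete elementarity at the cost of a parity-dependent case analysis; yours buys brevity, uniformity in $d$, and a conceptual explanation --- uniqueness of the standard submodule of the pair-permutation module --- of why the answer is forced.
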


\begin{proof}

For $1 \leq i < j \leq 2g + 1$, we write $[i, j]' = [j, i]'$ for the elements of $K_2^{\times} / (K_2^{\times})^2$ represented by $\sqrt{\gamma_{i, j}}$ (note that $[i, j]' = [i, j]$ in the $d = 2g + 1$ case); we need to show that $V$ is spanned as an $\ff_2$-space by the set $\{\sum_{j \neq i} [i, j]'\}_{1 \leq i \leq 2g + 1}$.

We know from Lemma \ref{lemma standard rep} that the action $\psi : S_d \to \Aut(V)$ defines the standard representation of $S_d$ of dimension $d - 1$ (resp. $d - 2$) if $d$ is odd (resp. even).  We first assume that $d = 2g + 1$.  By the construction of the standard representation, there exist elements $v_i \in V$ for $1 \leq i \leq 2g + 1$ which span $V$ and satisfy the unique linear relation $\sum_{1 \leq i \leq 2g + 1} v_i = 0$, and such that $S_{2g + 1}$ acts on the set of $v_i$'s by $v_i^{\sigma} = v_{\sigma(i)}$ for each permutation $\sigma$.  We claim that $v_1 = \sum_{j \neq 1} [1, j]$, from which it follows by acting on $v_1$ by any transposition $(1 i)$ that $v_i = \sum_{j \neq i} [i, j]$ for $2 \leq i \leq 2g + 1$, and we get the desired spanning set for $V$.

As $v_1$ is obviously nontrivial, some $[s, t]$ appears in its expansion as a linear combination of basis elements of $\widetilde{V}$.  Suppose that $1 \in \{s, t\}$.  Then the elements $[1, j]$ appear in the expansion of $v_i$ for all $2 \leq j \leq 2g + 1$, due to the fact that $v_1$ is fixed by every permutation in $S_{2g + 1}$ which fixes $1$.  If, on the other hand, $1 \notin \{s, t\}$, then by a similar argument, \textit{all} elements $[s, t]$ with $1 \notin \{s, t\}$ appear in the expansion of $v_1$.  It follows that either (i) $v_1 = \sum_{j \neq 1} [1, j]$, (ii) $v_1 = \sum_{s, t \neq 1} [s, t]$, or (iii) $v_1 = \sum_{1 \leq s < t \leq 2g + 1} [s, t]$.  In case (ii), we see that $\sum_{1 \leq i \leq 2g + 1} v_i = \sum_{1 \leq s < t \leq 2g + 1} [s, t] \neq 0$, a contradiction.  In case (iii), we see that $v_1$ is fixed by all elements of $S_{2g + 1}$ and therefore, all the elements $v_i$ are equal, which contradicts the fact that $V$ has dimension $2g$.  Therefore, (i) holds, and we are done.

Now assume that $d = 2g + 2$.  By the construction of the standard representation, there exist elements $v_{i, j} = v_{j, i} \in V$ for $1 \leq i < j \leq 2g + 2$ such that for any $i$, $\{v_{i, j}\}_{j \neq i}$ spans $V$ and satisfies the unique linear relation $\sum_{j \neq i} v_{i, j} = 0$; such that $v_{s, t} + v_{s, j} = v_{t, j}$ for distinct $s, t, j$; and such that $S_{2g + 2}$ acts on the set of $v_{i, j}$'s by $v_{i, j}^{\sigma} = v_{\sigma(i), \sigma(j)}$ for each permutation $\sigma$.  We claim that $v_{1, 2g + 2} = \sum_{2 \leq j \leq 2g + 1} [1, j]'$, from which we can again see by acting on $v_{1, 2g + 2}$ by any transposition $(1 i)$ that $v_{i, 2g + 2} = \sum_{j \neq i} [i, j]'$ for $2 \leq i \leq 2g + 1$, and we again get the desired spanning set for $V$.

By a similar analysis to what was done for the $d = 2g + 1$ case, we deduce that $v_{1, 2g + 2}$ is some linear combination of the elements $[i, 2g + 2]$, $\sum_{2 \leq j \leq 2g + 1} ([1, j] + [2g + 2, j])$, and $\sum_{2 \leq s < t \leq 2g + 1} [s, t]$.  We first note that $v_{1, 2g + 2}$ cannot be the sum of an odd number of terms $[s, t]$, because then that would be the case for each other $v_{i, 2g + 2}$, and then the $2g + 1$ terms $v_{i, 2g + 2}$ could not sum to $0$.  We also note that, as in the $d = 2g + 1$ case, the element $v_{1, 2g + 1}$ can neither be trivial nor equal to the sum $\sum_{1 \leq s < t \leq 2g + 2} [s, t]$.  Finally, it is straightforward to check that if $\sum_{2 \leq s < t \leq 2g + 1} [s, t]$ appears in the expansion of $v_{1, 2g + 2}$, then the $v_{s, t} + v_{s, j} = v_{t, j}$ property does not hold.  Our only remaining choice is that $v_{1, 2g + 2} = \sum_{2 \leq j \leq 2g + 1} ([1, j] + [2g + 2, j]) = \sum_{2 \leq j \leq 2g + 1} [1, j]'$, and we are done.

\end{proof}

\subsection{The $g = 1$ case}

Lemma \ref{lemma abelianization}, together with the results of \S\ref{sec2} and the fact that $K_{\infty} \subset K^{\unr}$, imply that $K_{\infty}^{\ab}$ is an extension of $K_2 = K_1(\sqrt{\gamma_{1, 2}}, \sqrt{\gamma_{1, 3}}, \sqrt{\gamma_{2, 3}})$ obtained by adjoining $2$ independent $4$th roots of products of the elements $\gamma_{i, j} \in K_1$ (recall from \cite{yelton2017note} that $K_1$ is generated over $K$ by the $\gamma_{i, j}$'s both when $d = 3$ and when $d = 4$).  Therefore, in this case, we get via Kummer theory a canonical identification of $\Gal(K_{\infty}^{\ab} / K_2)$ with some subgroup $V \subset K_2^{\times} / (K_2^{\times})^4$; the submodule $V$ of the $\zz / 4\zz$-module $K_2^{\times} / (K_2^{\times})^4$ is free of rank $2$.  In fact, $V$ is contained in the rank-$3$ free submodule $\widetilde{V} \subset K_2^{\times} / (K_2^{\times})^4$ generated by images of the elements $\sqrt{\gamma_{i, j}} \in K_2^{\times}$.  We denote each of these images by $[i, j]' = [j, i]' \in \widetilde{V}$.  We now proceed to explicitly determine the rank-$2$ submodule $V \subset \widetilde{V}$.

As with the $g \geq 2$ case, we have an action of $G_2$ on $V$ which factors through the quotient $\Gal(K_1 / K)$; this quotient is isomorphic to $S_3$ both when $d = 3$ and when $d = 4$.  This action sends a permutation $\bar{\sigma} \in S_3$ to the automorphism of $V$ determined by $[i, j]' = [\bar{\sigma}(i), \bar{\sigma}(j)]'$ for $1 \leq i < j \leq 3$.  We again write $\psi : S_3 \to \Aut(V)$ for this action.  The following proposition suffices to prove Theorem \ref{thm main}(b) when $k = \cc$.

\begin{prop} \label{prop main g=1}

The submodule $V \subset \widetilde{V}$ is generated by 

$$\{[1, 2]' + [1, 3]' + 2[2, 3]', [2, 3]' + [2, 1]' + 2[3, 1]', [3, 1]' + [3, 2]' + 2[1, 2]'\} \subset \widetilde{V}.$$

\end{prop}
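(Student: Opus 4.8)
The plan is to reduce the statement to a computation in the representation theory of $S_3$ over $\zz/4\zz$. I would identify $\widetilde V$ with the rank-$3$ permutation module whose standard basis is $\{[1,2]', [1,3]', [2,3]'\}$, on which $\psi$ permutes the three vectors exactly as $S_3$ permutes the three $2$-element subsets of $\{1,2,3\}$, i.e.\ $[i,j]' \mapsto [\bar\sigma(i), \bar\sigma(j)]'$. Let $s : \widetilde V \to \zz/4\zz$ be the $S_3$-invariant surjection sending each $[i,j]' \mapsto 1$, and set $P_0 = \ker s$, the augmentation submodule. The claim I aim to prove is that $V = P_0$ and that the three listed elements generate $P_0$; the former equality is the heart of the matter.

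First I would pin down the group structure of $\widetilde V / V$. Since $V$ is free of rank $2$ and $\widetilde V$ free of rank $3$ over $\zz/4\zz$, this quotient has order $4^3/4^2 = 4$, so it is either cyclic of order $4$ or isomorphic to $(\zz/2\zz)^2$. The latter would force $2\widetilde V \subseteq V$; but $2\widetilde V \cong (\zz/2\zz)^3$ is elementary abelian of order $8$, whereas the $2$-torsion of the free module $V$ is exactly $2V \cong (\zz/2\zz)^2$, of order $4$. As $8 > 4$, the inclusion $2\widetilde V \subseteq V$ is impossible, so $\widetilde V / V \cong \zz/4\zz$ as a group.

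Next I would exploit $S_3$-equivariance of the quotient map. Because $\Aut(\zz/4\zz) = \{\pm 1\}$, the $S_3$-action on $\widetilde V / V \cong \zz/4\zz$ is through a character $\chi : S_3 \to \{\pm 1\}$, either trivial or the sign character, and the quotient map realizes $V$ as the kernel of an $S_3$-equivariant surjection $\phi : \widetilde V \to (\zz/4\zz)_\chi$. Writing $\phi([i,j]') = c_{i,j}$, equivariance reads $c_{\bar\sigma(i), \bar\sigma(j)} = \chi(\bar\sigma)\, c_{i,j}$. If $\chi$ is trivial, transitivity of $S_3$ on the three pairs forces all $c_{i,j}$ to be equal, so $\phi$ is a unit multiple of $s$ and $\ker\phi = P_0$. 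If $\chi$ is the sign character, then applying the transposition $(i\,j)$, which fixes the pair $\{i,j\}$, gives $c_{i,j} = -c_{i,j}$, whence $c_{i,j} \in \{0,2\}$ for every pair and $\phi$ cannot be surjective. Thus only the trivial-character case occurs, and $V = P_0$.

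Finally I would verify the listed generators. Each of the three elements has coefficient-sum $\equiv 0 \pmod 4$, hence lies in $P_0$; their sum is $0$, which is the single relation among them; and, writing the first two in the basis $[1,2]', [1,3]', [2,3]'$ as $(1,1,2)$ and $(1,2,1)$, the $2\times 2$ minor in the columns indexed by $[1,2]'$ and $[1,3]'$ equals $1$, a unit modulo $4$, so they span a free rank-$2$ submodule. Since this submodule lies in $P_0$ and both have order $16$, they coincide, proving that the three elements generate $V = P_0$. The main obstacle here is the passage from the mod-$2$ picture to the exact $\zz/4\zz$-structure, namely getting the coefficient-$2$ terms right rather than merely identifying the reduction $V \otimes \ff_2$; this is precisely what the cyclicity of $\widetilde V / V$ together with the character analysis accomplishes, so no further arithmetic input (such as the explicit decorations of Remark \ref{rmk dyadic elliptic}) is required beyond the $S_3$-module structure already established.
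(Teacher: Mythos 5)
Your proof is correct, but it reaches the key identification $V = \ker(\text{augmentation})$ by a genuinely different route than the paper. The paper's proof first uses the faithfulness of $\psi$ (borrowed from the proof of Proposition \ref{prop main g>1}) to identify the induced action on $V/2V$ with the standard representation of $S_3 \cong \SL_2(\ff_2)$, from which it deduces that $V/2V$ is spanned by the images of $\sum_{j \neq i} [i,j]'$ and hence that $2[1,2]' + 2[1,3]' + 2[2,3]' \notin V$; it then introduces the same functional (its $\Phi$ is your $s$) and proves $V \subseteq \ker\Phi$ by averaging: if $\Phi(v) \neq 0$ for some $v \in V$, then $\sum_{\bar{\sigma} \in A_3} \psi(\bar{\sigma})(v) \in V$ is $A_3$-fixed with $\Phi$-value $3\Phi(v) \neq 0$, hence a nontrivial multiple of $[1,2]' + [1,3]' + [2,3]'$, contradicting the previous step; an order count ($|\ker\Phi| = 64/4 = 16 = |V|$) finishes. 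You instead analyze the quotient $\widetilde{V}/V$ directly: the torsion comparison ($2\widetilde{V}$ has order $8$, while the $2$-torsion $2V$ of the free module $V$ has order $4$) forces $\widetilde{V}/V \cong \zz/4\zz$, and then equivariance through $\Aut(\zz/4\zz) = \{\pm 1\}$, with the symmetry $[i,j]' = [j,i]'$ killing the sign character (a transposition fixing the pair $\{i,j\}$ gives $c_{i,j} = -c_{i,j}$, destroying surjectivity), pins the quotient map down as a unit multiple of the augmentation. This is arguably cleaner: it needs neither the faithfulness of $\psi$ nor the mod-$2$ standard-representation input, so the paper's intermediate fact $2[i,j]' + 2[i,k]' \in V$ becomes unnecessary; everything you use (freeness and ranks of $V \subset \widetilde{V}$, $S_3$-stability of $V$) is established in the paper's setup before the proposition. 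What the paper's route buys in exchange is uniformity with the $g \geq 2$ analysis, where the standard-representation machinery is unavoidable. Your concluding generator check (the unit $2 \times 2$ minor exhibiting a free rank-$2$ submodule of order $16 = |\ker s|$, with the third generator redundant since the three sum to zero) is a concrete version of the paper's unspecified ``elementary verification'' that the listed elements generate a group of order $16$.
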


\begin{proof}

We first note that the action $\psi$ has trivial kernel, by the same argument as was used in the proof of Proposition \ref{prop main g>1}.  Since $\Aut(V / 2V) \cong \SL_2(\ff_2) \cong S_3$, this implies that the induced action of $\psi$ modulo $2V$ is isomorphic to the standard representation of $S_3$.  By essentially the same argument that was used in the proof of Proposition \ref{prop main g>1} for the odd-degree case, this shows that $V / 2V$ is spanned by the images modulo $2$ of the elements $\sigma_{j \neq i} [i, j]'$ for $i = 1, 2, 3$, implying that 
$$2[1, 2]' + 2[1, 3]', \ 2[2, 3]' + 2[2, 1]', \ 2[3, 1]' + 2[3, 2]' \in V.$$
  Therefore, the element $2[1, 2]' + 2[1, 3]' + 2[2, 3]' \in \widetilde{V}$ cannot lie in $V$, because otherwise $V$ would contain the subgroup generated by $2[1, 2]', 2[1, 3]', 2[2, 3]'$, which is isomorphic to $(\zz / 2\zz)^3$, contradicting the fact that $V$ has rank $2$.

Let $\Phi : \widetilde{V} \to \zz / 4\zz$ be the functional sending $c_1[2, 3]' + c_2[3, 1]' + c_3[1, 2]'$ to $c_1 + c_2 + c_3 \in \zz / 4\zz$.  We claim that $V$ coincides with the kernel of $\Phi$.  Suppose that for some $v \in V$, we have $\Phi(v) \neq 0$.  Since $\Phi(\psi(\bar{\sigma})(v)) = \Phi(v)$ for all $\bar{\sigma} \in S_3$, we have $\Phi(\sum_{\bar{\sigma} \in A_3} \psi(\bar{\sigma})(v)) = 3\Phi(v) \neq 0$.  But $\sum_{\bar{\sigma} \in A_3} \psi(\bar{\sigma})(v) \in V$ is fixed by $A_3 \lhd S_3$ and is therefore some nontrivial multiple of $[1, 2]' + [1, 3]' + [2, 3]' \in \widetilde{V}$, which contradicts the fact that $2[1, 2]' + 2[1, 3]' + 2[2, 3]' \notin V$.  Since the image of $\Phi$ has order $4$, its kernel has order $64 / 4 = 16$, hence the claim.  Meanwhile, the generators given in the statement of the proposition lie in the kernel of $\Phi$, and the statement now follows from the elementary verification that the group they generate also has order $16$.

\end{proof}

\subsection{The action of $-1$}

We have shown that parts (a) and (b) of Theorem \ref{thm main} hold when $k = \cc$; we will now prove that the element $-1 \in \Gamma(2) \cong \Gal(K_{\infty} / K_1)$ acts as stated in Theorem \ref{thm main}(c), and then the full theorem will be proved over the complex numbers.  In the odd-degree case, it follows immediately from the fact that $\rho^{\tp}(\Sigma) = -1$ by Proposition \ref{prop rho top}(b) combined with Lemma \ref{lemma pure braid action abelian} that since $\gamma_{i, j} = \alpha_j - \alpha_i$, the element $-1$ acts trivially (resp. by sign change) on the generators of $K_{\infty}^{\ab} / K_2$ listed in the statement of the theorem if $g$ is even (resp. if $g$ is odd).  Similarly in the even-degree case, it follows immediately from the fact that $\rho^{\tp}(\Sigma') = -1$ by Proposition \ref{prop rho top}(c) combined with Lemma \ref{lemma pure braid action abelian} that since $\gamma_{i, j} = (\alpha_j - \alpha_i) \prod_{l \neq i, j} (\alpha_{2g + 2} - \alpha_l)$, the element $-1$ acts trivially (resp. by sign change) on the generators of $K_{\infty}^{\ab} / K_2$ listed in the statement of the theorem if $g$ is even (resp. if $g$ is odd).

\section{Proof of the theorem in the general case} \label{sec4}

We have shown that Theorem \ref{thm main} holds when $k = \cc$; we will now prove that this suffices to show that Theorem \ref{thm main} holds in general.  In this section, whenever the ground field $k$ is specified (e.g. $k = \qq$), we will write $K_k$ (e.g. $K_{\qq}$) for the extension of $k$ obtained by adjoining the symmetric functions of the independent transcendental elements $\alpha_1, ... , \alpha_d$; our goal is to show that the particular generators of $(K_{\cc})_{\infty}^{\ab} / K_{\cc}$ that we found in \S\ref{sec2} and \S\ref{sec3} can also be used to generate $K_{\infty}^{\ab} / K$ (and that any Galois element mapping to $-1 \in \GSp(T_2(J))$ acts in the same way on them), where $K = K_k$ for any field $k$ of characteristic different from $2$.  The rest of this section will be devoted to proving the following proposition.

\begin{prop} \label{prop descent}

To prove Theorem \ref{thm main}, it suffices to prove the statement when $k = \cc$.

\end{prop}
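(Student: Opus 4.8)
The plan is to realize the entire situation over an arbitrary base field $k$ as a base change of a single ``universal'' situation defined over the prime field $k_0 \subseteq k$ (so $k_0 = \qq$ or $k_0 = \ff_p$ with $p$ odd), and to show that every ingredient used in \S\ref{sec2}--\S\ref{sec3} is insensitive to this base change apart from the contribution of the field of constants. Concretely, the configuration space $Y_d$, the universal family $\mathcal{C} \to Y_d$, the elements $\gamma_{i,j}$, and the representation $\rho_2$ all descend to $\Spec \zz[1/2]$; and the group-theoretic inputs --- Proposition \ref{prop rho top}, Lemma \ref{lemma abelianization}, and the Kummer-theoretic computations of Propositions \ref{prop main g>1} and \ref{prop main g=1} --- are assertions about $\Gamma(2) \lhd \Sp_{2g}$ and its abelianization, hence manifestly independent of $k$.

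First I would reduce to the case $k$ algebraically closed. For any algebraically closed $\bar{k}$ of characteristic $\neq 2$, the geometric \'etale fundamental group of $Y_{d, \bar{k}}$ has the same maximal pro-$2$ quotient as over $\cc$: this follows from Grothendieck's specialization theorem for the prime-to-$p$ fundamental group (both $Y_d$ and the family being defined over $\zz[1/2]$, and $Y_d$ being the complement of the braid arrangement, hence a $K(\pi,1)$), together with the topological comparison with $P_d$ over $\cc$ underlying Lemma \ref{lemma key}. Since $\rho_2$ factors through the pro-$2$ completion of $\widehat{P}_d$ and $2$ is prime to $\mathrm{char}(k)$, Proposition \ref{prop rho top}(a) continues to identify the image of the geometric fundamental group with $\Gamma(2)$ over any such $\bar{k}$, and the central elements $\Sigma, \Sigma'$ still map to $\pm 1$ as in parts (b) and (c). Consequently the analysis of \S\ref{sec2}--\S\ref{sec3} goes through verbatim over $\bar{k}$: the maximal abelian subextension $K_{\bar{k}, \infty}^{\ab} / K_{\bar{k}, 1}$ is cut out by exactly the radicals of the $\gamma_{i,j}$ listed in Theorem \ref{thm main}, with $\mu_2 \subset \bar{k}$ so that the $\mu_2$-terms are vacuous.

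It then remains to descend from $\bar{k}$ to $k$, and here the field of constants enters and produces the case distinctions of Theorem \ref{thm main}. The group $\Gal(K_{k, \infty} / K_{k, 1})$ contains the geometric subgroup $\Gal(\bar{k} \cdot K_{k, \infty} / \bar{k} \cdot K_{k, 1})$ computed above, the quotient acting through the cyclotomic character $\chi_2$; equivariance of the Weil pairing forces $\mu_2 \subseteq K_{k, \infty}$, so that $K_{k, 1}(\mu_2) / K_{k, 1}$ is an abelian subextension. A radical $\sqrt[2^m]{\prod \gamma_{i,j}}$ contributes to the maximal abelian subextension of $K_{k, \infty} / K_{k, 1}$ precisely when the corresponding Kummer extension of $K_{k, 1}$ is abelian, which requires $\zeta_{2^m} \in K_{k, 1}$; since $K_{k, 1}$ is a function field with constant field $k$, this is equivalent to $\zeta_{2^m} \in k$. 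This is exactly what distinguishes the cases $\zeta_4 \in k$ (resp.\ $\zeta_8 \in k$) from their negations in parts (a) and (b), while the action of any $\sigma$ with $\rho_2(\sigma) = -1$ is the same as over $\bar{k}$ by Theorem \ref{thm 4-torsion} and the computations of \S\ref{sec3}, since $-1$ lies in the geometric part.

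The main obstacle is precisely this constant-field step: one must cleanly separate the geometric part of $K_{\infty} / K_1$ (invariant under all base changes of characteristic $\neq 2$) from the arithmetic part carried by $\chi_2$, and then verify that the explicit radicals remain a correct generating set and that the claimed Galois-group structure and degrees descend, matching the $\zeta_4$- and $\zeta_8$-conditions. This bookkeeping is the analogue in our setting of Step 4 of the proof of \cite[Lemma 3]{yelton2017note}, and it is the only part of the argument that uses anything about $k$ beyond $\mathrm{char}(k) \neq 2$.
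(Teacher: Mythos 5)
Your first step---transporting the monodromy computation to an algebraically closed field of characteristic $p$ via Grothendieck's specialization theorem for the prime-to-$p$ fundamental group---is a plausible alternative to what the paper actually does (the paper instead proves Lemma \ref{lemma full Galois image} purely arithmetically, using the existence of a transvection in the image of $\rho_2$ together with the group-theoretic result of \cite{brumer2017large}, and reaches characteristic $p$ by reducing an integral model modulo $p$). With some care about the compatibility of the specialization isomorphism with the symplectic structure and with the Kummer-theoretic description in Lemma \ref{lemma pure braid action abelian} (which over $\bar{k}$ in characteristic $p$ needs the tame analogue of the topological argument), that half of your plan could be made to work.

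The genuine gap is the step you set aside as ``bookkeeping'': the constant-twist problem, which is in fact the main content of \S\ref{sec4}. Knowing Theorem \ref{thm main} over $\bar{k}$ (or over $\cc$) only tells you that $K_{\infty}^{\ab}$ contains, for each $i$, an element $u_i$ with $u_i^4 = a_i \prod_{j \neq i} \gamma_{i, j}$ for some constant $a_i$; using $K_{\infty} \cap \bar{k} = k(\mu_2)$ one can place $a_i$ in $k(\mu_2)$, but no base-change argument can do better, because over $\bar{k}$ every constant is a fourth power and the twist is invisible. Whether $\sqrt[4]{\prod_{j \neq i} \gamma_{i, j}}$ itself, rather than, say, $\sqrt[4]{3\prod_{j \neq i} \gamma_{i, j}}$, lies in $K_{\infty}$ is exactly what your geometric/arithmetic decomposition cannot decide, and the cyclotomic conditions $\zeta_4 \in k$, $\zeta_8 \in k$ that you correctly identify only govern which twisted Kummer extensions are abelian over $K_1$, not which twist actually occurs inside $K_{\infty}$. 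The paper resolves this by working integrally: it builds the abelian scheme $\mathfrak{J}$ over $S = \Spec(\zz[\frac{1}{2}, \{\alpha_i\}, \{(\alpha_i - \alpha_j)^{-1}\}]^{S_d})$, uses the finite \'etaleness of $\mathfrak{J}[2^n] \to S$ and integral closedness of $\mathcal{O}_S$ to force the constants to be unramified away from $2$ (hence $a_{i, j} \in \{\pm 1, \pm 2\}$ for the square-root generators, absorbed by $\zz[\zeta_8]$), then uses the $S_d$-action to equalize the fourth-root constants $a_i$ to a single $a$, and finally the product trick---$\prod_i a\sqrt{\prod_{j \neq i} \gamma_{i, j}} = \pm a^{2g + 1} \prod_{i < j} \gamma_{i, j}$, whose square root already lies in $\mathcal{O}_{S, 2}[\mu_2]$---to conclude $\sqrt{a} \in \qq(\mu_2)$, absorbing the twist. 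Your proposal supplies no mechanism of this kind (the appeal to Step 4 of \cite[Lemma 3]{yelton2017note} is what the paper uses for Corollary \ref{cor specialization}, a specialization in the \emph{other} direction, not for this descent), so as written the proof of the proposition is incomplete at its central point.
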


We assume that $g \geq 2$ and only prove part (a) of the theorem as well as part (c) for the $g \geq 2$ case, noting that the claims for the $g = 1$ case result from very similar arguments.  In what follows, we will freely use the obvious fact that given an abelian variety $A$ over a field $F$ and an extension $F' / F$, the finite algebraic extension $F'(A[2^n])$ coincides with the compositum $F' F(A[2^n])$ for any $n \geq 1$.  We first need the following lemmas.

\begin{lemma} \label{lemma full Galois image}

Let $k$ be any field of characteristic different from $2$ with separable closure $\bar{k}$.  Then we have 

a) $\Gal(K_n(\mu_2) / K_1(\mu_2)) \cong \Gamma(2) / \Gamma(2^n)$ for $n \geq 1$ and thus $\Gal(K_{\infty}(\mu_2) / K_1(\mu_2)) \cong \Gamma(2)$; and 

b) $K_n \cap \bar{k} = k(\zeta_{2^n})$ for $n \geq 1$ and thus $K_{\infty} \cap \bar{k} = k(\mu_2)$.

\end{lemma}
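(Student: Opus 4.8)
The plan is to deduce both parts from the description of the purely geometric (pro-$2$) monodromy established over $\cc$ in \S\ref{sec2}, transported to an arbitrary base field, together with routine Galois-theoretic bookkeeping. Throughout I write $\tilde{K}_1 := \bar{k} \cdot K_1$ for the ``geometric'' $2$-torsion field.

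For part (a), I would first record the easy containment. Since $K_1(\mu_2) = K_1 \cdot k(\mu_2)$ contains $J[2]$ and all $2$-power roots of unity, every $\sigma \in G_{K_1(\mu_2)}$ acts trivially on $J[2]$ and satisfies $\chi_2(\sigma) = 1$, so $\rho_2(G_{K_1(\mu_2)}) \subseteq \Sp(T_2(J))$ in fact lies in $\Gamma(2)$; reducing modulo $2^n$ gives an injection $\Gal(K_n(\mu_2)/K_1(\mu_2)) \hookrightarrow \Gamma(2)/\Gamma(2^n)$. For the reverse inclusion I would invoke fullness of the geometric monodromy, namely $\rho_2(G_{\tilde{K}_1}) = \Gamma(2)$: over $\cc$ this is exactly Proposition \ref{prop rho top}(a) together with Lemma \ref{lemma key} (the image of the subgroup of $\widehat{P}_d$ fixing $J[2]$ is $\Gamma(2)$), and for a general field it holds because the pro-$2$ geometric fundamental group of the configuration space is insensitive to the base field. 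As $\mu_2 \subseteq \bar{k}$ gives $K_1(\mu_2) \subseteq \tilde{K}_1$ and hence $G_{\tilde{K}_1} \subseteq G_{K_1(\mu_2)}$, we are squeezed between $\Gamma(2) = \rho_2(G_{\tilde{K}_1})$ and $\Gamma(2)$, forcing $\rho_2(G_{K_1(\mu_2)}) = \Gamma(2)$ and the claimed isomorphisms.

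For part (b), the inclusion $k(\zeta_{2^n}) \subseteq K_n \cap \bar{k}$ is immediate from the Galois-equivariance and surjectivity of the Weil pairing $J[2^n] \times J[2^n] \to \mu_{2^n}$, whose values generate $\mu_{2^n}$ and lie in $K_n$. For the reverse inclusion I would compute $[K_n : \cdot]$ in two ways. First, over $\tilde{K}_1$ the geometric computation of part (a) gives $\Gal(\tilde{K}_1(J[2^n])/\tilde{K}_1) \cong \Gamma(2)/\Gamma(2^n)$, and since $\tilde{K}_1(J[2^n]) = \tilde{K}_1 K_n$ the standard restriction isomorphism identifies this with $\Gal(K_n/(K_n \cap \tilde{K}_1))$, so $[K_n : K_n \cap \tilde{K}_1] = |\Gamma(2)/\Gamma(2^n)|$. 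Second, over $K_1(\zeta_{2^n})$ every automorphism fixes $J[2]$ and $\zeta_{2^n}$, hence has trivial reduction and trivial similitude, so $\rho_2(G_{K_1(\zeta_{2^n})}) \subseteq \Gamma(2)$; it contains $\rho_2(G_{K_1(\mu_2)}) = \Gamma(2)$ by part (a), and as $K_n = K_1(\zeta_{2^n})(J[2^n])$ this yields $[K_n : K_1(\zeta_{2^n})] = |\Gamma(2)/\Gamma(2^n)|$ as well. Since $\zeta_{2^n} \in \bar{k} \subseteq \tilde{K}_1$ forces $K_1(\zeta_{2^n}) \subseteq K_n \cap \tilde{K}_1$, equality of the two indices gives $K_1(\zeta_{2^n}) = K_n \cap \tilde{K}_1$; intersecting with $\bar{k}$ and using that $K_1$ is a regular extension of $k$ (the classical fact that the generic splitting field has geometric Galois group $S_d$, so $K_1 \cap \bar{k} = k$) then gives $K_n \cap \bar{k} = k(\zeta_{2^n})$. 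The infinite-level statements follow by passing to the limit over $n$.

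The main obstacle I expect is precisely the fullness of the geometric monodromy over a field of characteristic $p > 2$, i.e.\ the identity $\rho_2(G_{\tilde{K}_1}) = \Gamma(2)$ used in both parts. Over $\cc$ it is supplied by the topological arguments of \S\ref{sec2}, and reducing to that case rests on the invariance of the pro-$2$ geometric fundamental group of the ordered configuration space under change of algebraically closed ground field: in characteristic $0$ this is invariance of $\pi_1$ under extension of the base, and in characteristic $p > 2$ it requires Grothendieck's specialization isomorphism for the prime-to-$p$ \'{e}tale fundamental group (legitimate since $2 \neq p$). Once this single geometric input is in place, the remainder is the index comparison and regularity bookkeeping above, which are routine.
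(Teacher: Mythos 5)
Your proposal is sound, but it reaches part (a) by a genuinely different route than the paper. The paper never transports the geometric monodromy from $\cc$: it argues arithmetically and uniformly in all characteristics $\neq 2$, producing a transvection $v \mapsto v + e_2(v, a)a$ in the image of $\rho_2$ from the semistable-reduction discussion of \cite[\S2.3]{anni2017constructing} (with \cite{dokchitser2009regulator}, \cite{grothendieck1972modeles}), observing that the mod-$2$ image is the full symmetric group because the generic polynomial has Galois group $S_d$, and then invoking the group-theoretic fullness criterion of \cite[Theorem 2.1.1]{brumer2017large} to conclude $\rho_2(G_K) \supseteq \Gamma(2)$; restricting to $G_{K_1(\mu_2)}$ gives (a) at once. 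Your route --- Proposition \ref{prop rho top}(a) plus Lemma \ref{lemma key} over $\cc$, transported to arbitrary $\bar{k}$ via invariance of the prime-to-$p$ \'{e}tale fundamental group of the configuration space --- is also legitimate: the representation factors through the pro-$2$ group $\Gamma(2)$ and $p \neq 2$, and the integral model $\mathfrak{J} \to S$ over $\zz[\frac{1}{2}]$ constructed in \S\ref{sec4}, with $\mathfrak{J}[2^n] \to S$ finite \'{e}tale, supplies exactly the spreading-out your specialization map needs. But it is the heavier input, since one must also verify a relative good compactification of $Y_d$ to apply the tame specialization theorem. The paper's citation-based route buys uniformity across characteristics with no \'{e}tale-homotopy machinery; yours buys self-containedness relative to \S\ref{sec2}. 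For part (b) the two arguments are close relatives: the paper deduces linear disjointness of $K_n(\mu_2)$ and $\bar{k}K_1$ over $K_1(\mu_2)$ from (a) and then identifies $K_n \cap k(\mu_2)$ as the fixed field of the kernel of the mod-$2^n$ similitude character $G_2 \to (\zz / 2^n \zz)^{\times}$, whereas you pin down $K_n \cap \bar{k}K_1 = K_1(\zeta_{2^n})$ by a double index count and finish using regularity of $K_1 / k$; both hinge on the same geometric fullness and are interchangeable.

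One small slip to repair: your assertion $\rho_2(G_{K_1(\zeta_{2^n})}) \subseteq \Gamma(2)$ is false at the $2$-adic level, since fixing $\zeta_{2^n}$ only forces $\chi_2(\sigma) \equiv 1 \pmod{2^n}$, not $\chi_2(\sigma) = 1$; the $2$-adic image lies only in $\{g \in \widetilde{\Gamma}(2) \ | \ \chi(g) \equiv 1 \bmod 2^n\}$ in the notation of the paper's proof. The fix is to work modulo $2^n$ throughout that step: $\bar{\rho}_{2^n}(G_{K_1(\zeta_{2^n})})$ lies in the set of elements of $\GSp(J[2^n])$ with trivial reduction modulo $2$ and similitude factor $1$, and by surjectivity of $\Sp_{2g}(\zz_2) \to \Sp_{2g}(\zz / 2^n \zz)$ this set is exactly the image of $\Gamma(2)$ modulo $2^n$; combined with the containment of $\bar{\rho}_{2^n}(G_{K_1(\mu_2)})$ supplied by part (a), this still yields $[K_n : K_1(\zeta_{2^n})] = |\Gamma(2) / \Gamma(2^n)|$, so your index comparison goes through unchanged.
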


\begin{proof}

The author has shown (a) for $k$ of characteristic $0$ (as \cite[Proposition 4.1]{yelton2015images}) but the following argument proves (a) in the case of positive characteristic also. We first claim that the image of $\rho_2$ in $\GSp(T_2(J))$ contains a transvection given by $v \mapsto v + e_2(v, a)a$ for some $a \in T_2(J) \smallsetminus 2T_2(J)$. This follows from the discussion in \cite[\S2.3]{anni2017constructing} (see also \cite[\S3.v]{dokchitser2009regulator} and \cite[\S9-10]{grothendieck1972modeles} and note that the argument holds in positive characteristic as well).  Meanwhile, since the polynomial defining the hyperelliptic curve $C$ has full Galois group, the image of $\bar{\rho}_2$ is isomorphic to $S_{2g + 1}$ or $S_{2g + 2}$.  It now follows from \cite[Theorem 2.1.1, \S2.2]{brumer2017large} that the image $G_2$ of $\rho_2$ in $\GSp(T_2(J))$ contains $\Gamma(2) \lhd \Sp(T_2(J))$. After restricting to the absolute Galois group of $K_1(\mu_2)$, this image coincides with $\Gamma(2)$, and (a) immediately follows.

The linear disjointness of $K_n(\mu_2)$ and $\bar{k}K_1$ over $K_1(\mu_2)$ follows immediately from the fact that $\Gal(\bar{k} K_n / \bar{k} K_1) \cong \Gal(K_n(\mu_2) / K_1(\mu_2)) \cong \Gamma(2) / \Gamma(2^n)$ by part (a).  Moreover, it is clear from the well-known description of $2$-torsion fields discussed above that $K_1(\mu_2) \cap \bar{k} = k(\mu_2)$, so we get $K_n(\mu_2) \cap \bar{k} = (K_n(\mu_2) \cap \bar{k}K_1) \cap \bar{k} = k(\mu_2)$.  It follows that we have $K_{\infty} \cap \bar{k} = k(\mu_2)$, so to prove part (b) it suffices to show that $K_n \cap k(\mu_2) = k(\zeta_{2^n})$ for $n \geq 1$.

Let $\widetilde{\Gamma}(2^n) \lhd \GSp(T_2(J))$ denote the kernel of reduction modulo $2^n$ for each $n \geq 1$, and write $G_2 \subset \GSp(T_2(J))$ for the image of $\rho_2$.  It is clear that $G_2 \cap \widetilde{\Gamma}(2^n)$ is isomorphic to the subgroup of $\Gal(K_{\infty} / K)$ fixing $K_n$; meanwhile, part (a) says that $G_2 \cap \Gamma(2) = \Gamma(2)$ is isomorphic to the Galois subgroup fixing $k(\mu_2)$. Therefore, $K_n \cap k(\mu_2)$ is the fixed field of the subgroup of $\Gal(K_{\infty} / K)$ generated by $G_2 \cap \widetilde{\Gamma}(2^n)$ and $\Gamma(2)$, which is easily seen to coincide with the kernel of the mod-$2^n$ determinant map $G_2 \to (\zz / 2^n \zz)^{\times}$. But by equivariance of the Weil pairing, the fixed field of this subgroup coincides with $k(\zeta_{2^n})$, and we are done.

\end{proof}

Let $J_0$ denote the Jacobian of the hyperelliptic curve $C_0$ defined over $K_{\qq}$ given by the equation in (\ref{eq hyperelliptic}).  Note that $C_0$ admits a smooth model $\mathfrak{C}$ over 
$$S := \Spec(\zz[\textstyle\frac{1}{2}, \{\alpha_{i}\}_{1 \leq i \leq d}, \{(\alpha_{i} - \alpha_{j})^{-1}\}_{1 \leq i < j \leq 2g + 1}]^{S_d}),$$
 where the superscript ``$S_d$" indicates taking the subring of invariants under the obvious permutation action on the $\alpha_i$'s.
 Define $\mathfrak{J} \to S$ to be the abelian scheme representing the Picard functor of the scheme $\mathfrak{C} \to S$ (see \cite[Theorem 8.1]{milne1986jacobian}).  Note that the ring $\zz[\textstyle\frac{1}{2}, \{\alpha_{i}\}_{1 \leq i \leq d}, \{(\alpha_{i} - \alpha_{j})^{-1}\}_{1 \leq i < j \leq 2g + 1}]$, along with all subrings of invariants under finite groups of automorphisms, is integrally closed; in particular, $\mathcal{O}_S := \zz[\textstyle\frac{1}{2}, \{\alpha_{i}\}_{1 \leq i \leq d}, \{(\alpha_{i} - \alpha_{j})^{-1}\}_{1 \leq i < j \leq 2g + 1}]^{S_d}$ is integrally closed.

For each $n \geq 1$, \cite[Proposition 20.7]{milne1986abelian} implies that the kernel of the multiplication-by-$2^n$ map on $\mathfrak{J} \to S$, which we denote by $\mathfrak{J}[2^n] \to S$, is a finite \'{e}tale group scheme over $S$.  Since the morphism $\mathfrak{J}[2^n] \to S$ is finite, $\mathfrak{J}[2^n]$ is an affine scheme; we write $\mathcal{O}_{S, n} \supset \mathcal{O}_S$ for the minimal extension of scalars under which $\mathfrak{J}[2^n]$ becomes constant.  It follows from the fact that $\mathcal{O}_S$ is integrally closed and from the finite \'{e}taleness of $\mathfrak{J}[2^n]$ that $\mathcal{O}_{S, n}$ is also integrally closed; its fraction field coincides with $K_0(J_0[2^n])$.

Let $\mathcal{O}_{S, \infty}^{\ab} \supset \mathcal{O}_S$ denote the integrally closed extension whose fraction field coincides with the maximal abelian subextension $(K_{\qq(\mu_2)})_{\infty}^{\ab}$ of $(K_{\qq})_{\infty} / (K_{\qq})_1(\mu_2)$.  Lemmas \ref{lemma abelianization} and \ref{lemma full Galois image} together imply that $(K_{\qq})_2(\mu_2) \subsetneq (K_{\qq(\mu_2)})_{\infty}^{\ab} \subsetneq (K_{\qq})_3(\mu_2)$; that the extension $(K_{\qq(\mu_2)})_{\infty}^{\ab} / (K_{\qq})_1(\mu_2)$ has Galois group isomorphic to $(\zz / 2\zz)^{2g^2 - g} \times (\zz / 4\zz)^{2g}$; and that the analogous statements hold over each $\ff_p$.  Thus it is clear that for each prime $p \neq 2$, the fraction field of $\mathcal{O}_{S, \infty}^{\ab} / (p)$ coincides with the subfield of $(K_{\ff_p})_3(\mu_2)$ fixed by the kernel of the map $\bar{\pi} : \Gamma(2) / \Gamma(8) \twoheadrightarrow (\zz / 2\zz)^{2g^2 - g} \times (\zz / 4\zz)^{2g}$ induced by the abelianization map $\pi$ and is therefore the maximal abelian subextension of $(K_{\ff_p})_{\infty} / (K_{\ff_p})_1(\mu_2)$.  Moreover, if $k$ is any field of characteristic $p \neq 2$, then it similarly follows from Lemmas \ref{lemma abelianization} and \ref{lemma full Galois image} that $(K_{k(\mu_2)})_{\infty}^{\ab}$ coincides with the subfield of $K_3(\mu_2)$ fixed by the kernel of $\bar{\pi}$.

Note that $k(\mu_2)$ contains $\mathfrak{f}(\mu_2)$, where the prime subfield $\mathfrak{f}$ is $\qq$ (resp. $\ff_p$) if the characteristic of $k$ is $0$ (resp. $p \geq 3$).  It then follows from the linear disjointness of $K_3(\mu_2)$ and $\bar{\mathfrak{f}}$ over $\mathfrak{f}(\mu_2)$ given by Lemma \ref{lemma full Galois image}(b) that the subfield of $K_3(\mu_2)$ fixed by the kernel of $\bar{\pi}$ coincides with the compositum of $K_1(\mu_2)$ with the subfield of $(K_{\mathfrak{f}})_3(\mu_2)$ fixed by the kernel of $\bar{\pi}$.  The extension $(K_{k(\mu_2)})_{\infty}^{\ab}$ is therefore generated over $K_1(\mu_2)$ by the generators of $\mathcal{O}_{S, \infty}^{\ab}$ over $\mathcal{O}_{S, 1}[\mu_2]$ (resp. by the images of these generators modulo $(p)$) if $k$ has characteristic $0$ (resp. if $k$ has characteristic $p \geq 3$).

It remains to show that these generators are the same ones appearing in Theorem \ref{thm main}, for which we need another lemma.

\begin{lemma} \label{lemma compositum}

The fields $(K_{\cc})_n$ for $n \geq 1$, $(K_{\cc})_{\infty}$, and $(K_{\cc})_{\infty}^{\ab}$ coincide with the compositums of $\cc$ with $(K_{\qq(\mu_2)})_n$, $(K_{\qq(\mu_2)})_{\infty}$, and $(K_{\qq(\mu_2)})_{\infty}^{\ab}$ respectively.

\end{lemma}

\begin{proof}

For $n \geq 1$, let $\theta_n : \Gal((K_{\cc})_{\infty} / (K_{\cc})_n) \to \Gal((K_{\qq})_{\infty} / (K_{\qq})_n(\mu_2))$ be the map given by the composition of the obvious inclusion map with the obvious restriction map.  It is shown in the proof of \cite[Proposition 4.1]{yelton2015images} that each $\theta_n$ is an isomorphism (this can also be deduced from Lemma \ref{lemma full Galois image}(a)). Since $\theta_1$ and $\theta_3$ are isomorphisms, they induce an isomorphism $\Gamma(2) / \Gamma(8) \cong \Gal((K_{\cc})_3 / (K_{\cc})_1) \stackrel{\sim}{\to} \Gal((K_{\qq})_3(\mu_2) / (K_{\qq})_1(\mu_2))$, the image of whose restriction to $\Gal((K_{\cc})_3 / (K_{\cc})_{\infty}^{\ab})$ fixes the subfield $(K_{\qq(\mu_2)})_{\infty}^{\ab}$.  It follows from the definition of $\theta_3$ that $K_{\infty}^{\ab} = \cc (K_{\qq(\mu_2)})_{\infty}^{\ab}$.

\end{proof}

We now claim that $\{\gamma_{i, j}\}_{1 \leq i < j \leq 2g + 1}$ is a set of generators for $\mathcal{O}_{S, 2}[\zeta_8]$ over  $\mathcal{O}_{S,1}[\zeta_8]$.  Indeed, we see that $(K_{\cc})_1(\{\gamma_{i, j}\}_{1 \leq i < j \leq 2g + 1}) = (K_{\cc})_2 = \cc (K_{\qq})_2$ and 
\begin{equation} \Gal((K_{\qq})_2 / (K_{\qq})_1) \cong \widetilde{\Gamma}(2) / \widetilde{\Gamma}(4) = \Gamma(2) / \Gamma(4) \times \langle \iota \rangle \cong (\zz / 2\zz)^{2g^2 + g} \times \langle \iota \rangle, \end{equation}
 using Lemma \ref{lemma full Galois image}(a) and (b) and Lemma \ref{lemma abelianization}, where $\iota \in \Gal((K_{\qq})_2 / (K_{\qq})_1)$ is any automorphism that acts on $\qq(\zeta_4)$ by complex conjugation. It follows that $\mathcal{O}_{S, 2}$ is generated over $\mathcal{O}_{S, 1}$ by $\zeta_4$ and the square roots of integral elements $a_{i, j}\gamma_{i, j}$ for some $a_{i, j} \in \zz$ for $1 \leq i < j \leq 2g + 1$.  Since the extension $\mathcal{O}_{S, 2}$ is unramified over $\mathcal{O}_S$, we have $a_{i, j} \in \{\pm1, \pm2\}$.  But $\sqrt{\pm 1}, \sqrt{\pm 2} \in \zz[\zeta_8] \subset \mathcal{O}_{S, 1}[\zeta_8]$, so we have $\mathcal{O}_{S, 2}[\zeta_8] = \mathcal{O}_{S, 1}[\zeta_8, \{\sqrt{\gamma_{i, j}}\}_{1 \leq i < j \leq 2g + 1}]$, as claimed.

Next we find formulas for generators of $\mathcal{O}_{S, \infty}^{\ab}$ over $\mathcal{O}_{S, 1}(\mu_2)$ using the ones we have shown for $k = \cc$.  We know that $\mathcal{O}_{S, \infty}^{\ab} \supsetneq \mathcal{O}_{S, 2}(\mu_2) = \mathcal{O}_{S, 1}[\mu_2, \{\sqrt{\gamma_{i, j}}\}_{1 \leq i < j \leq 2g + 1}]$, and so, by Lemma \ref{lemma abelianization}, $\mathcal{O}_{S, \infty}^{\ab}$ is generated over $\mathcal{O}_{S, 2}(\mu_2)$ by square roots of $2g$ independent integral elements.  Then it is clear from Lemma \ref{lemma compositum} that we may choose these $2g$ elements to be of the form $a_i\sqrt{\scriptstyle\prod_{j \neq i} \gamma_{i, j}}$ for some $a_i \in \zz[\mu_2]$ for $1 \leq i \leq 2g$ and that the extension also contains a square root of $a_{2g + 1}\sqrt{\scriptstyle\prod_{j \neq 2g + 1} \gamma_{2g + 1, j}}$ for some $a_{2g + 1} \in \zz[\mu_2]$.  Using the fact that $\mathcal{O}_{S, \infty}^{\ab}$ is Galois over $\mathcal{O}_S[\mu_2]$, from conjugating by Galois automorphisms that fix $\qq(\mu_2)$ but permute the $\alpha_i$'s, we see that we may choose the elements $a_1, ... , a_{2g + 1}$ to be the same element $a \in \zz[\mu_2]$.  Note that the product of these $2g + 1$ elements $a\sqrt{\scriptstyle\prod_{j \neq i} \gamma_{i, j}}$ can be written as $\pm a^{2g + 1} \prod_{1 \leq i < j \leq 2g + 1} \gamma_{i, j}$, and this product must have a square root in $\mathcal{O}_{S, \infty}^{\ab}$.  But we already know that $\pm\prod_{1 \leq i < j \leq 2g + 1} \gamma_{i, j}$ has a square root in $\mathcal{O}_{S, 2}[\mu_2]$, so we have $\sqrt{a} \in \mathcal{O}_{S, \infty}^{\ab}$.  Since $\sqrt{a}$ is algebraic over $\qq(\mu_2)$, we get $\sqrt{a} \in \qq(\mu_2)$ by Lemma \ref{lemma full Galois image}(b).  Then $\mathcal{O}_{S, \infty}^{\ab}$ is generated over $\mathcal{O}_{S, 2}[\mu_2]$ by the elements $\sqrt{a}\sqrt[4]{\scriptstyle\prod_{j \neq i} \gamma_{i, j}}$.  Thus, the fraction field $(K_{\qq(\mu_2)})_{\infty}^{\ab}$ of $\mathcal{O}_{S, \infty}^{\ab}$ (resp. the fraction field $(K_{\ff_p(\mu_2)})_{\infty}^{\ab}$ of $\mathcal{O}_{S, \infty}^{\ab} / (p)$ for each prime $p \neq 2$) is generated over $(K_{\qq(\mu_2)})_2$ (resp. $(K_{\ff_p(\mu_2)})_2$) by the elements given in Theorem \ref{thm main}(a).

What we have shown above is that given any field $k$, the statement of Theorem \ref{thm main} holds over $k(\mu_2)$.  It is now clear that $K_1(\mu_2, \{\sqrt{\gamma_{i, j}}\}_{1 \leq i < j \leq 2g + 1}, \{\sqrt[4]{\scriptstyle\prod_{j \neq i} \gamma_{i, j}}\}_{1 \leq i \leq 2g + 1})$ is a subextension of $(K_{k(\mu_2)})_{\infty} = K_{\infty}(\mu_2) = K_{\infty} / K_1$.  If $\zeta_4 \in k$, then this subextension is clearly Kummer and therefore abelian, and it must be maximal abelian since there is no larger subextension which is abelian over $K_1(\mu_2)$.  If $\zeta_4 \notin k$, then $K_{\infty}^{\ab}$ must be the largest subextension of $K_1(\mu_2, \{\sqrt{\gamma_{i, j}}\}_{1 \leq i < j \leq 2g + 1}, \{\sqrt[4]{\scriptstyle\prod_{j \neq i} \gamma_{i, j}}\}_{1 \leq i \leq 2g + 1})$ which is abelian over $K_1$; this clearly coincides with $K_2(\mu_2) = K_1(\mu_2, \{\sqrt{\gamma_{i, j}}\}_{1 \leq i < j \leq 2g + 1})$.  Thus, the statement of Theorem \ref{thm main}(a) is proved over $k$.

Finally, let $\sigma \in G_K$ be an element such that $\rho_2(\sigma) = -1 \in \GSp(T_2(J))$.  Then it is clear from tracing through the above arguments that $\sigma$ acts on $\mathcal{O}_{S, \infty}^{\ab}$ by changing the signs of the generators $\sqrt{\gamma_{i, j}}$ and by fixing (resp. changing the signs of) the remaining generators $\scriptstyle \sqrt{\prod_{j \neq i} \gamma_{i, j}}$ if $g$ is even (resp. if $g$ is odd), and that therefore, it acts this way on $K_{\infty}^{\ab}$, proving part (c) over $k$.

\bibliographystyle{plain}

\begin{thebibliography}{10}

\bibitem{a1979tresses}
Norbert A'Campo.
\newblock Tresses, monodromie et le groupe symplectique.
\newblock {\em Commentarii Mathematici Helvetici}, 54(1):318--327, 1979.

\bibitem{anni2017constructing}
Samuele Anni and Vladimir Dokchitser.
\newblock Constructing hyperelliptic curves with surjective {G}alois
  representations.
\newblock {\em arXiv preprint arXiv:1701.05915}, 2017.

\bibitem{birman1974braids}
Joan~S. Birman.
\newblock \textit{Braids, links, and mapping class groups}.
\newblock {\em Annals of Mathematical Studies}, (82), 1974.

\bibitem{brumer2017large}
Armand Brumer and Kenneth Kramer.
\newblock Large 2-adic galois image and non-existence of certain abelian
  surfaces over {Q}.
\newblock {\em arXiv preprint arXiv:1701.01890}, 2017.

\bibitem{dokchitser2009regulator}
Tim Dokchitser and Vladimir Dokchitser.
\newblock Regulator constants and the parity conjecture.
\newblock {\em Inventiones mathematicae}, 178(1):23, 2009.

\bibitem{grothendieck1972modeles}
Alexandre Grothendieck and Michel Raynaud.
\newblock Modeles de n{\'e}ron et monodromie.
\newblock In {\em Groupes de Monodromie en G{\'e}om{\'e}trie Alg{\'e}brique},
  pages 313--523. Springer, 1972.

\bibitem{hasson2017prime}
Hilaf Hasson and Jeffrey Yelton.
\newblock Prime-to-$ p $ \'{e}tale fundamental groups of punctured projective
  lines over strictly henselian fields.
\newblock {\em arXiv preprint arXiv:1707.00649}, 2017.

\bibitem{milne1986abelian}
James~S. Milne.
\newblock Abelian varieties.
\newblock In {\em Arithmetic geometry}, pages 103--150. Springer, 1986.

\bibitem{milne1986jacobian}
James~S. Milne.
\newblock Jacobian varieties.
\newblock In {\em Arithmetic geometry}, pages 167--212. Springer, 1986.

\bibitem{mumford1984tata}
{D}avid {M}umford.
\newblock Tata lectures on theta {II}.
\newblock {\em Progress in Mathematics}, 43, 1984.

\bibitem{sato2010abelianization}
Masatoshi Sato.
\newblock The abelianization of the level {\em d} mapping class group.
\newblock {\em Journal of Topology}, 3(4):847--882, 2010.

\bibitem{serre1968good}
Jean-Pierre Serre and John Tate.
\newblock Good reduction of abelian varieties.
\newblock {\em Annals of Mathematics}, pages 492--517, 1968.

\bibitem{wagner1976faithful}
Ascher Wagner.
\newblock The faithful linear representation of least degree of $S_{n}$ and $A_{n}$
  over a field of characteristic 2.
\newblock {\em Mathematische Zeitschrift}, 151(2):127--137, 1976.

\bibitem{yelton2015dyadic}
Jeffrey Yelton.
\newblock Dyadic torsion of elliptic curves.
\newblock {\em European Journal of Mathematics}, 1(4):704--716, 2015.

\bibitem{yelton2015images}
Jeffrey Yelton.
\newblock Images of 2-adic representations associated to hyperelliptic
  {J}acobians.
\newblock {\em Journal of Number Theory}, 151:7--17, 2015.

\bibitem{yelton2017note}
Jeffrey Yelton.
\newblock A note on 8-division fields of elliptic curves.
\newblock {\em European Journal of Mathematics}, 3: 603, 2017.

\bibitem{yu351toward}
Jiu-Kang Yu.
\newblock Toward a proof of the {C}ohen-{L}enstra conjecture in the function
  field case.
\newblock {\em Universit{\'e} Bordeaux I-A2X}, 351, 1996.

\end{thebibliography}

\end{document}